\newcommand{\Z}{\mathbb{Z}}
\newcommand{\N}{\mathbb{N}}
\newcommand{\C}{\mathbb{C}}
\newcommand{\eps}{\varepsilon}
\newcommand{\Manoa}{M\=anoa}
\newcommand{\Hawaii}{Hawai\kern.05em`\kern.05em\relax i}
\newcommand{\SOTh}{\mathrm{SOT}\text{-}}
\newcommand{\cstu}{\mathrm{C}^*_u}
\newcommand{\roeq}{\mathrm{Q}^*_u}
\newtheorem*{rigprob*}{Rigidity Problem for uniform Roe Algebras}
\newtheorem*{rigprobcorona*}{Rigidity Problem for uniform Roe Coronas}
\newcommand{\cstar}{$\mathrm{C}^*$}
\newcommand{\cZ}{\mathcal{Z}}
\newcommand{\cP}{\mathcal{P}}
\newcommand{\bbN}{\mathbb{N}}
\newcommand{\cB}{\mathcal{B}}
\newcommand{\cK}{\mathcal{K}}
\newcommand{\bbR}{\mathbb{R}}
\newcommand{\R}{\mathbb{R}}
\numberwithin{equation}{section}
\newtheorem{theorem}{Theorem}[section]
\newtheorem*{theorem*}{Theorem}
\newtheorem{proposition}[theorem]{Proposition}
\newtheorem*{proposition*}{Proposition}
\newtheorem{lemma}[theorem]{Lemma}
\newtheorem*{lemma*}{Lemma}
\newtheorem{corollary}[theorem]{Corollary}
\newtheorem*{corollary*}{Corollar}
\newtheorem*{fact*}{Fact}
\theoremstyle{definition}
\newtheorem{definition}[theorem]{Definition}
\newtheorem*{definition*}{Definition}
\newtheorem*{acknowledgments}{Acknowledgments}
\newtheorem{claim}[theorem]{Claim}
\newtheorem*{claim*}{Claim}
\newtheorem{conjecture}[theorem]{Conjecture}
\newtheorem*{conjecture*}{Conjecture}
\theoremstyle{remark}
\newtheorem{example}[theorem]{Example}
\newtheorem*{example*}{Example}
\newtheorem{remark}[theorem]{Remark}
\newtheorem*{remark*}{Remark}
\newtheorem*{note*}{Note}
\newtheorem*{question*}{Question}
\DeclareMathOperator{\propg}{prop}
\DeclareMathOperator{\rank}{rank}
\newcommand{\ri}{\partial}
\numberwithin{equation}{section}
\newcommand{\barh}{\partial^h}
\begin{document}
 
\title[Coarse equivalence versus bijective coarse equivalence]{Coarse equivalence versus bijective coarse equivalence of expander graphs}%

\date{\today} 

\author[Baudier]{Florent P. Baudier}
\address[F. P. Baudier]{Texas A\&M University, Department of Mathematics, College Station, TX 77843-3368, USA} 
 \email{florent@tamu.edu}
 \urladdr{https://www.math.tamu.edu/~florent/}

\author[Braga]{Bruno M. Braga}
\address[B. M. Braga]{IMPA, Estrada Dona Castorina 110, 22460-320, Rio de Janeiro, Brazi}
\email{demendoncabraga@gmail.com}
\urladdr{https://sites.google.com/site/demendoncabraga}
 
\author[Farah]{Ilijas Farah}

\address[I. Farah]{Department of Mathematics and Statistics\\
York University\\
4700 Keele Street\\
North York, Ontario\\ Canada, M3J 1P3\\
and 
Matemati\v cki Institut SANU\\
Kneza Mihaila 36\\
11\,000 Beograd, p.p. 367\\
Serbia}
\email{ifarah@yorku.ca}
\urladdr{https://ifarah.mathstats.yorku.ca}

\author[Vignati]{Alessandro Vignati}
\address[A. Vignati]{
Institut de Math\'ematiques de Jussieu (IMJ-PRG)\\
Universit\'e Paris Cit\'e\\
B\^atiment Sophie Germain\\
8 Place Aur\'elie Nemours \\ 75013 Paris, France}
\email{alessandro.vignati@imj-prg.fr}
\urladdr{http://www.automorph.net/avignati}

\author[Willett]{Rufus Willett}
\address[R. Willett]{University of \Hawaii~at \Manoa, 2565 McCarthy Mall, Keller 401A, Honolulu, HI 96816, USA} 
\email{rufus@math.hawaii.edu}
\urladdr{https://math.hawaii.edu/~rufus/}
 
\maketitle

\begin{abstract}
We provide a characterization of when a coarse equivalence between  coarse disjoint unions of expander graphs is close to a bijective coarse equivalence. We use this to show that  if the uniform Roe algebras of coarse disjoint unions of expanders graphs are isomorphic, then the metric spaces must be bijectively coarsely equivalent.     
\end{abstract}

\section{Introduction}

This paper concerns equivalences of metric spaces which preserve their large scale geometry. Large scale geometry is the study of metric spaces by an observer positioned arbitrarily far from them. In this spirit, events happening at a bounded distance from each other are identified. Formally, we say that maps $f,g\colon X\to (Y,d_Y)$ from a set into  a metric space are \emph{close} if 
\[\sup_{x\in X}d_Y(f(x),g(x))<\infty.\]
The morphisms of interest here are given by     maps $f\colon (X,d_X)\to (Y,d_Y)$ between metric spaces which are \emph{coarse}, meaning that   for all $r>0$ there is $s>0$ such that 
\[d_X(x,z)<r\ \text{ implies }\ d_Y(f(x),f(z))<s.\]
We say that  $(X,d_X)$ and $(Y,d_Y)$ are \emph{coarsely equivalent} if there are coarse maps $f\colon X\to Y$ and $g\colon Y\to X$ whose compositions are close to the respective identity maps; $g$ is then called a \emph{coarse inverse} for $f$.  

It is clear from these definitions that if $N\subseteq X$ is a \emph{net}, i.e., an $\eps$-dense and $\delta$-separated subset of $X$ for some $\eps,\delta>0$, then the inclusion $N\hookrightarrow X $ is a coarse equivalence.  This observation allows researches in  large scale geometry  to focus on  discrete metric spaces.   The metric spaces considered in these notes will be not only discrete, but also \emph{uniformly locally finite} (abbreviated as u.l.f.\ from now on), i.e.,  for each $r>0$ there is a uniform bound on the cardinality of the balls of radius $r$ in our metric spaces.\footnote{Some authors refer to those spaces as metric spaces with \emph{bounded geometry}.} 

For the last three decades, understanding when the existence of a coarse equivalence implies the existence of a bijective coarse equivalence\footnote{Most of this research actually focused on the slightly different question of whether the existence of a so-called quasi-isometry implies the existence of a bi-Lipschitz equivalence.  However for many interesting classes of spaces, including those in Questions \eqref{qf2} and \eqref{qr2}, this is the same question: compare for example \cite[Corollary 1.4.14]{NowakYuBook}.} has been the object of intensive research in the field: we refer to \cite{GenevoisTessera22} for a historical discussion. To the best of our knowledge, this program was initiated by Gromov  in \cite[\S1.A$'$]{GromovBook1993CambridgePress}. Besides  explicitly proposing  this general question, Gromov also pointed out the complexity of solving such problems even for very concrete cases. For instance, Gromov asked the following:  
\begin{enumerate}
\item \label{qf2} Are the free groups $\mathbb  F_2$ and $\mathbb F_3$ bijectively coarsely equivalent?
\item \label{qr2} Are all separated nets in $\R^2$ bijectively coarsely equivalent?
\end{enumerate}
Question \eqref{qr2} has been solved positively in \cite[Corollary 1]{Papasoglu1995GeoDedicata} while Question \eqref{qf2} was negatively solved independently in \cite[Section 3 and Theorem 4.1]{McMullen1998GAFA} and \cite[Theorem 1.1]{BuragoKleiner1998GAFA}. Notice that, when $\bbR^2$ is replaced by an infinite dimensional Banach space,  Question \eqref{qr2} has a positive answer (see \cite[Proposition 4.2]{LindenstraussMatouskovaPreissIsraelJournal2000}).

A u.l.f.\ metric space $X$ is \emph{amenable} if for any $\epsilon>0$ and $r>0$ there exists a finite subset $A$ of $X$ such that if $\partial_r(A):=\{x\in X\mid 0<d(x,A)\leq r\}$, then
$$
|\partial_r(A)|< \epsilon|A|. 
$$
For non-amenable u.l.f.\ metric spaces, the problem was completely settled independently by V. Nekrashevych \cite{Nekrashevych98} and K. Whyte \cite[Theorems 1.1 and 1.4]{Whyte1999Duke}\footnote{In \cite[page 104]{Harpe2000book} Papasoglou is also credited with an independent proof of Theorem 1.1, although the cited paper \cite{Papasoglu1995GeoDedicata} only explicitly deals with trees.}.  The methods\footnote{The precise statements in the literature are more restrictive, but the proofs still work in this level of generality.  We give a proof in the stated level of generality as Corollary \ref{ThmBijCoarseNonAme} below.} of Nekrashevych and Whyte establish the following result:

\begin{theorem}\label{inab}
Let  $X$ and $Y$ be u.l.f.\ metric spaces and suppose $Y$ is non-amenable. Then any coarse equivalence between $X$ and $Y$ is close to a bijective coarse equivalence.    \label{WhyteIntro}
\end{theorem}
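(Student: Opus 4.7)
The plan is to follow the classical Nekrashevych--Whyte approach via Hall's marriage theorem. Let $f\colon X\to Y$ be a coarse equivalence with coarse inverse $g\colon Y\to X$. First I would extract two uniform finiteness properties of $f$ from the u.l.f.\ hypothesis: since $g\circ f$ is close to $\id_X$, there is $R>0$ with $d_X(x,g(f(x)))\leq R$ for all $x$, so $f(x)=f(x')$ forces $d_X(x,x')\leq 2R$, making $f$ at most $K$-to-one for some integer $K$ by u.l.f.\ of $X$; similarly, $f(X)$ is $S$-dense in $Y$ for some $S>0$.

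Next I would build, for each parameter $r>0$, a bipartite graph $\Gamma_r$ with parts $X$ and $Y$ and an edge between $x$ and $y$ whenever $d_Y(f(x),y)\leq r$. By u.l.f.\ on both sides, $\Gamma_r$ has uniformly bounded degrees. Any perfect matching in $\Gamma_r$ is a bijection $h\colon X\to Y$ satisfying $\sup_x d_Y(f(x),h(x))\leq r$, so $h$ is close to $f$ and hence itself a coarse equivalence, producing the desired bijective coarse equivalence.

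The heart of the proof is to verify Hall's condition on both sides of $\Gamma_r$ for $r$ large enough. Non-amenability of $Y$ yields constants $\e>0$ and $r_0>0$ with $|\partial_{r_0}(A)|\geq\e|A|$ for every finite $A\subseteq Y$; writing $N_r(A)$ for the closed $r$-neighborhood of $A$ in $Y$, this iterates to $|N_{kr_0}(A)|\geq(1+\e)^k|A|$. For the $X$-side: a finite $A\subseteq X$ has $|f(A)|\geq|A|/K$, so its $\Gamma_r$-neighborhood equals $N_r(f(A))$ and has cardinality at least $(1+\e)^{\lfloor r/r_0\rfloor}|A|/K$, which exceeds $|A|$ once $r$ is large. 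For the $Y$-side: a finite $B\subseteq Y$ has $\Gamma_r$-neighborhood $f^{-1}(N_r(B))$, of cardinality at least $|N_r(B)\cap f(X)|$. By $S$-density of $f(X)$, every $y\in N_{r-S}(B)$ lies within $S$ of some $f(x)\in N_r(B)\cap f(X)$, and u.l.f.\ of $Y$ bounds the multiplicity of this ``nearest $f(X)$'' assignment by a constant $M$, so $|N_r(B)\cap f(X)|\geq|N_{r-S}(B)|/M\geq(1+\e)^{\lfloor(r-S)/r_0\rfloor}|B|/M$, again exceeding $|B|$ for $r$ large.

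With both Hall conditions in place, a standard infinite version of Hall's marriage theorem for locally finite bipartite graphs (combining matchings saturating each side via a Cantor--Bernstein-type argument on the alternating paths) produces a perfect matching, completing the proof. The main obstacle is the $Y$-side condition: while the $X$-side is an almost direct consequence of the expansion of $Y$ applied to $f(A)$, the $Y$-side requires converting expansion of $Y$ into expansion of $f(X)$, which is where the density of $f(X)$ and the u.l.f.\ multiplicity bound enter crucially.
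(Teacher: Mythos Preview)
Your proof is correct and takes essentially the same approach as the paper: Hall's marriage theorem to get injective maps close to $f$ on each side, followed by the K\"onig/Schr\"oder--Bernstein argument to combine them into a bijection. The only difference is packaging: the paper isolates a one-sided proposition (any uniformly finite-to-one coarse map with non-amenable \emph{domain} is close to an injection, via the Hall system $\Phi_r(x)=f(B_X(x,r))$) and then applies it symmetrically to $f$ and to a coarse inverse, whereas you verify both Hall conditions directly in a single bipartite graph using non-amenability of $Y$; your $Y$-side estimate (density of $f(X)$ plus a u.l.f.\ multiplicity bound) is exactly the transfer of non-amenability from $Y$ to $X$ that the paper's packaging makes implicit.
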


For amenable metric spaces, it is straightforward to see that this does not hold:  while $\{n^2\mid n\in\N\}\hookrightarrow\{n^2,n^2+1\mid n\in\N\} $ is a coarse equivalence, these two spaces are not bijectively coarsely equivalent. In fact, as shown in \cite[Theorem 3.5]{Dymarz2005ContempMath}, if $G$ is a finitely generated amenable group endowed with the word metric given by a finite generating subset and $H$ is a finite index proper subgroup of $G$, then the inclusion $H\hookrightarrow G$ is a coarse equivalence which is not close to any bijective coarse equivalence. 
 
We now describe the main results of our paper. The metric spaces of interest will be coarse disjoint unions of \emph{expander graphs}. Expander graphs are sequences of finite graphs whose cardinalities converge to infinity, and which have the following two competing properties: (1) they are fairly sparse (in terms of number of edges relative to the number of vertices), yet  (2) they are highly connected.  See Definitions \ref{def:expander1} and \ref{def:expander2} for details.  Using a probabilistic approach, it was shown in \cite{Pinsker} that expander graphs not only exist, but exist in great abundance; soon after that, their first   explicit construction was presented in  \cite{Margulis1973ExpanderConstruction}. Nowadays there are several known constructions of expander graphs, we mention here a few:  they can be  constructed via algebraic methods, e.g. using Kazhdan's property (T) (see for example \cite[\S 3]{DavidoffSarnakValette}, \cite[\S6.1]{BekkaDelaValette}, or \cite{Tao15}), or via combinatorial methods, (for example, using zig-zag products as in \cite{RVW02} and \cite{MendelNaor14}). Expander graphs are, by nature, apt to applications outside of mathematics such as to  neural networks, physics, and transportation systems. An astonishingly large number of areas of pure mathematics  are also connected to, or use, expander graphs. Important for us, and for this paper, is the field of operator algebras, where expander graphs have been used to provide counterexamples to the coarse Baum--Connes Conjecture (\cite[\S7]{HigsonLafforgueSkandalis2002GAFA}), 
and to study exotic properties of $\mathcal{B}(\ell_2)$ (\cite[\S24.4 and 24.5]{Pisier:2020aa})
(amongst other things).  For more background, we recommend the excellent survey \cite{Kowalski.Expanders} and book \cite{LubotzkyBook2010}.  

In order to do coarse geometry with expanders, one arranges a sequence of such graphs into a \emph{coarse disjoint union}: roughly, this means that one builds a metric space from the disjoint union of the sequence by spacing the individual graphs further and further apart (see Definition \ref{cdu def} for details).  The intuition is that studying the coarse geometry of a coarse disjoint union of a sequence of finite graphs is the same as studying the asymptotic geometry of the underlying sequence.  We should note that a coarse disjoint union of finite graphs is always amenable, so Theorem \ref{inab} does not apply to such spaces.


Our main result is as follows.  It should be thought of as an analog of Theorem \ref{WhyteIntro} showing that the conclusion holds under an obvious necessary condition.

\begin{theorem}\label{ThmBijCorExp}
 Let $X$ and $Y$ be coarse disjoint unions of expander graphs, say $X=\bigsqcup_nX_n$ and $Y=\bigsqcup_nY_n$. Let $f\colon X\to Y$ be a coarse equivalence. The following are equivalent.
 \begin{enumerate}
 \item\label{ThmBijCorExpItem1} The map $f$ is close to a bijective coarse equivalence.
 \item\label{ThmBijCorExpItem2} There are cofinite subsets $N,M$ of $\N$ and a bijection $i\colon N\to M$ such that 
 \[\sum_{n\in \N\setminus N}|X_n|=\sum_{n\in \N\setminus M}|Y_n|\quad \]
 and for all $n\in N$
 \[
 \ |X_n|=|Y_{i(n)}|\ \text{ and }\ f(X_n)\subseteq Y_{i(n)}.
 \]
 \end{enumerate} 
\end{theorem}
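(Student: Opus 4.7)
The plan is to treat the two implications separately; $(1)\Rightarrow(2)$ is essentially bookkeeping while $(2)\Rightarrow(1)$ is where the real work lies.

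For $(1)\Rightarrow(2)$, let $h$ be a bijective coarse equivalence close to $f$. Each $X_n$ is a connected graph, and in the coarse disjoint union the spacing $d_Y(Y_m,Y_{m'})$ tends to infinity, so for any coarse map the image of $X_n$ is forced into a single component $Y_{i(n)}$ of $Y$ for all but finitely many $n$. Applying the same reasoning to a coarse inverse $g$ of $f$ produces an inverse $j$, and $g\circ f\sim \mathrm{id}_X$, $f\circ g\sim\mathrm{id}_Y$ make $i,j$ mutually inverse bijections between cofinite subsets $N,M\subseteq \N$. Running the same component-by-component argument with $h$ and $h^{-1}$ shows that, for $n\in N$ large enough, $h$ restricts to a bijection $X_n\to Y_{i(n)}$; after shrinking $N$ to a still cofinite subset, $h$ then also restricts to a bijection between the finite complements $X\setminus\bigsqcup_{n\in N}X_n$ and $Y\setminus\bigsqcup_{m\in M}Y_m$, yielding both $|X_n|=|Y_{i(n)}|$ on $N$ and the required cardinality identity on the complements.

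For $(2)\Rightarrow(1)$, the plan is to build $h$ one component at a time by Hall's marriage theorem. On $X_n\sqcup Y_{i(n)}$ consider the bipartite graph with an edge $\{x,y\}$ precisely when $d_Y(f(x),y)\le R$, with $R$ chosen large but independent of $n$. A perfect matching yields a bijection $h_n\colon X_n\to Y_{i(n)}$ with $\sup_{x\in X_n}d_Y(h_n(x),f(x))\le R$; piecing the $h_n$ together with any bijection between the two finite, equinumerous complements produces an $h$ close to $f$, hence itself a bijective coarse equivalence. The technical point is to verify Hall's condition $|N_R(f(A))\cap Y_{i(n)}|\ge |A|$ for every $A\subseteq X_n$ with $R$ uniform in $n$. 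I plan to invoke the expander isoperimetric inequality for $X_n$ and $Y_{i(n)}$ in two cases, together with uniform multiplicity bounds $K,K'$ on $f$ and on a fixed coarse inverse $g$ (which exist because the spaces are uniformly locally finite and $g\circ f\sim\mathrm{id}$, $f\circ g\sim\mathrm{id}$). When $|A|\le|X_n|/2$, so $|f(A)|\le|A|\le|Y_{i(n)}|/2$, iterating the vertex-expansion inequality of $Y_{i(n)}$ out from $f(A)$ gives $|N_R(f(A))\cap Y_{i(n)}|\ge \min\bigl((1+\alpha_Y)^{R/r_0}|f(A)|,|Y_{i(n)}|/2\bigr)$, and combined with $|f(A)|\ge |A|/K$ this exceeds $|A|$ once $R$ is large enough. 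The harder case is $|A|>|X_n|/2$: here I plan to show that $g$ maps the defect $Y_{i(n)}\setminus N_R(f(A))$ into $\{x\in X_n\colon d_X(x,A)>r'\}$, by using $f\circ g\sim\mathrm{id}$ to replace each $y$ by $f(g(y))$ and then using the contrapositive of the coarseness modulus of $f$ to translate the resulting distance bound in $Y$ into a distance bound in $X$ (this is where the asymmetry ``$R$ much larger than $r'$'' is spent). Since $|X_n\setminus A|<|X_n|/2$ and the $r'$-neighbourhood of $\{x\in X_n\colon d_X(x,A)>r'\}$ lies inside $X_n\setminus A$, iterating the expander inequality of $X_n$ bounds the size of this set by $|X_n\setminus A|/K'$ once $r'$ is large; the $K'$-to-one bound on $g$ then gives $|Y_{i(n)}\setminus N_R(f(A))|\le|X_n\setminus A|$, which is Hall's condition after taking complements and using $|X_n|=|Y_{i(n)}|$.

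The main obstacle is exactly this second case: the bare cardinality identity on each component is not enough on its own to produce a bijection close to $f|_{X_n}$, and one is forced to use the full strength of the expander hypothesis on \emph{both} $X_n$ and $Y_{i(n)}$, coupled with two-sided multiplicity control coming from the coarse equivalence. A secondary point to be careful about is that all the constants $R,r',K,K',\alpha_X,\alpha_Y$ must be chosen uniformly in $n$, which is precisely what the expander hypothesis (as opposed to an arbitrary sequence of finite graphs of the right cardinalities) buys.
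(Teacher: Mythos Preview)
Your $(1)\Rightarrow(2)$ is essentially the paper's argument. For $(2)\Rightarrow(1)$ your approach is correct but genuinely different from the paper's. The paper proceeds via Whyte's uniformly finite homology criterion (Theorems~\ref{Thm11Whyte} and~\ref{Thm76Whyte}): it first proves a general characterization (Theorem~\ref{ThmThm76Whyte1}) of when a uniformly finite-to-one coarse map into a coarse disjoint union is close to an injection, phrased as a boundary inequality, and then verifies that inequality for expanders using the elementary Lemma~\ref{LemmaEasy}; bijectivity is obtained either by the bijective version of the homology criterion or by combining the two resulting injections via Schr\"oder--Bernstein. You instead bypass the homology layer entirely and apply Hall's marriage theorem component by component, verifying Hall's condition directly from the expander inequalities on each side together with the two multiplicity bounds. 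Both routes ultimately rest on Hall (Whyte's own proof of Theorem~\ref{Thm11Whyte} does too), but yours is more elementary and in the spirit of the paper's treatment of the non-amenable case in Proposition~\ref{PropCoarseEmbImpliesInjCNonAme}; the paper's route, on the other hand, isolates intermediate statements (Theorems~\ref{ThmThm76Whyte1} and~\ref{ThmINJCorExp}) of independent interest and makes the obstruction visible as a homology class.

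One small point to make explicit in your Case~2: you use that $g(Y_{i(n)})\subseteq X_n$, which is not part of hypothesis~\eqref{ThmBijCorExpItem2}. It does hold for all sufficiently large $n\in N$ by the same connectedness-and-spacing argument you used in $(1)\Rightarrow(2)$, and shrinking $N$ further is harmless because removing $n$ from $N$ and $i(n)$ from $M$ adds $|X_n|=|Y_{i(n)}|$ to each side of the complement identity.
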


We do not know whether the conclusion of Theorem~\ref{ThmBijCorExp} can fail if $X$ and $Y$ are coarse disjoint unions of finite connected graphs (that are not necessarily expanders). 
The statement of Theorem \ref{ThmBijCorExp} is purely combinatorial/geometric, but the proof requires uniformly finite homology \cite{Block:1992qp} as exploited by Whyte in \cite{Whyte1999Duke}.

\subsection{Applications to rigidity of uniform Roe algebras}

We  give an application of Theorem \ref{inab} to rigidity of uniform Roe algebras.  Uniform Roe algebras are $\mathrm{C}^*$-algebras associated to u.l.f.\ metric spaces; prototypical versions were introduced by Roe in \cite{Roe:1988qy} for index-theoretic purposes.  

We quickly recall the definition. Let $(X,d_X)$ be a u.l.f.\ metric space. Then $\ell_2(X)$ denotes the (complex) Hilbert space of square-summable complex-valued functions on $X$ and $\cB(\ell_2(X))$ denotes the space of all bounded operators on $\ell_2(X)$. The standard orthonormal  basis of $\ell_2(X)$ is denoted by $(\delta_x)_{x\in X}$. The propagation of an operator $a\in\cB(\ell_2(X))$ is given by 
\[\propg(a)=\sup\{d_X(x,z)\mid \langle a\delta_x,\delta_z\rangle\neq 0\}\]
and the \emph{uniform Roe algebra of $X$}, denoted by $\cstu(X)$, is the norm closure of all operators in $\cB(\ell_2(X))$ with finite propagation.   

Uniform Roe algebras have found applications in index theory (e.g., \cite{Spakula:2009tg,Engel:2018vm}), \cstar-algebra theory (e.g., \cite{Rordam:2010kx,Li:2017ac}), single operator theory (for example \cite{Rabinovich:2004xe,Spakula:2014aa}), topological dynamics (e.g., \cite{Kellerhals:2013aa,Brodzki:2015kb}), and mathematical physics (e.g., \cite{Cedzich:2018wx,Kubota2017}).  At the core of the field, lie the so-called ``rigidity problems'' for uniform Roe algebras.  Roughly, these problems ask how much of the geometry of a u.l.f.\ space is remembered by its  uniform Roe algebra. For instance, it was open until very recently whether isomorphic uniform Roe algebras must have come from coarsely equivalent spaces; this has been recently shown to be true in  \cite[Theorem 1.2]{BaudierBragaFarahKhukhroVignatiWillett2021uRaRig}. However, the following stronger conjecture remains open: 

\begin{conjecture}\label{Conjecture}
Let $X$ and $Y$ be u.l.f.\ metric spaces. If $\cstu(X)$ and $\cstu(Y)$ are isomorphic, then $X$ and $Y$ are bijectively coarsely equivalent. 
\end{conjecture}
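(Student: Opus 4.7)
Since this conjecture is open in full generality, I propose a strategy for the special case to which the machinery of this paper applies: both $X$ and $Y$ are coarse disjoint unions of expander graphs, say $X=\bigsqcup_n X_n$ and $Y=\bigsqcup_n Y_n$. The plan is to combine Theorem \ref{ThmBijCorExp} with the uniform Roe algebra rigidity result \cite[Theorem 1.2]{BaudierBragaFarahKhukhroVignatiWillett2021uRaRig}, which already produces from any isomorphism $\Phi\colon\cstu(X)\to\cstu(Y)$ a coarse equivalence $f\colon X\to Y$. By Theorem \ref{ThmBijCorExp} it then suffices to verify condition (\ref{ThmBijCorExpItem2}): after deleting a finite part from each side of equal total cardinality, $f$ maps each $X_n$ into a $Y_{i(n)}$ of the same cardinality.

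The extra input needed beyond the mere existence of $f$ is a combinatorial matching of components, and I would extract this from $\Phi$ itself. Each characteristic function $\chi_{X_n}\in\cstu(X)$ is a projection of finite rank equal to $|X_n|$, so $p_n:=\Phi(\chi_{X_n})$ is a projection in $\cstu(Y)$ of the same rank. The next step is to compare $p_n$ with the family $\{\chi_{Y_m}\}_m$: if one can show that, for all but finitely many $n$, the projection $p_n$ is essentially supported on a single component $Y_{i(n)}$, then the rank equality forces $|X_n|=|Y_{i(n)}|$, and the containment $f(X_n)\subseteq Y_{i(n)}$ (up to a finite error) follows from the fact that $\Phi$ must track $f$ on sufficiently large subsets. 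The matching of finite exceptional cardinalities then drops out by a counting argument applied to $\Phi$ modulo the compact operators, i.e., at the level of the uniform Roe corona.

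The main obstacle will be the \emph{localization} claim: $p_n$ cannot be smeared across infinitely many $Y_m$. This is precisely where the expander hypothesis is expected to enter. The spectral gap of the $Y_m$ forces finite propagation operators to respect the coarse components of $Y$ in a strong quantitative sense, so a finite rank projection approximated by elements of bounded propagation ought not to be able to distribute its rank diffusely across infinitely many $Y_m$ without contradicting either the rapid mixing of random walks on expanders or the uniformly finite homological obstructions of \cite{Block:1992qp} that Whyte exploits in proving Theorem \ref{ThmBijCorExp}. Making this quantitative localization rigorous, and dovetailing it with the corona picture of $\Phi$, is where I expect the real difficulty of the expander case of the conjecture to lie; for general u.l.f.\ spaces, the absence of a spectral gap is exactly the reason the conjecture remains open.
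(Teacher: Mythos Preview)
Your overall strategy is exactly the one the paper follows: obtain a coarse equivalence $f$ from \cite[Theorem~1.2]{BaudierBragaFarahKhukhroVignatiWillett2021uRaRig}, then verify condition~\eqref{ThmBijCorExpItem2} of Theorem~\ref{ThmBijCorExp} by analysing the projections $p_n=\Phi(\chi_{X_n})$ and matching components via rank. Even the idea of passing to the uniform Roe corona to handle the finite exceptional part is on target.

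Where you go astray is in locating the hard step. You expect the \emph{localization} of $p_n$ to a single $Y_{i(n)}$ to require the expander hypothesis, invoking spectral gaps, rapid mixing, or Block--Weinberger homology. In fact the paper proves this localization (Theorem~\ref{T.|Xn|=|Yn|}) for \emph{arbitrary} coarse disjoint unions of $t$-connected finite metric spaces, with no expansion assumption whatsoever. The mechanism is purely operator-algebraic: since $\Phi$ is spatially implemented it descends to an isomorphism $\roeq(X)\to\roeq(Y)$, and one uses that the central projections of $\roeq(Y)$ are exactly the $\pi(\chi_{Y_A})$ for $A\subseteq\bbN$ (Theorem~\ref{T.Q(X).CentralProjection}, via the identification of the center with the Higson corona from \cite{BaudierBragaFarahVignatiWillett2023}). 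From this, a short compactness/projection argument (Lemma~\ref{L.piqi}) forces $\Phi(\chi_{X_n})$ to be $\varepsilon$-close to some $\chi_{Y_{i(n)}}$ for all large $n$; rank preservation then gives $|X_n|=|Y_{i(n)}|$, and the defining property of $f$ from the rigidity theorem yields $f(X_n)\subseteq Y_{i(n)}$.

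So the division of labour is the reverse of what you propose: the operator-algebraic matching of components is soft and general, while the expander hypothesis is consumed \emph{only} inside Theorem~\ref{ThmBijCorExp}, i.e., in the purely geometric step upgrading the coarse equivalence to a bijective one. Your proposed line of attack on localization via spectral gap is unnecessary and would likely be harder to make rigorous than the corona argument the paper actually uses.
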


Note that the converse direction is trivially true.  Before the present work, Conjecture \ref{Conjecture}
was known to be true in two cases: if one\footnote{Both property A and non-amenability are coarse invariants, and therefore one of the spaces has the property in question only if both of them do.} of $X$ and $Y$ has Yu's property A (see \cite[Corollary 6.13]{WhiteWillett2017}), or if one of $X$ and $Y$ is non-amenable (this is a consequence of \cite[Theorem 1.2]{BaudierBragaFarahKhukhroVignatiWillett2021uRaRig} and Theorem \ref{inab}).  Hence, Conjecture~\ref{Conjecture} remains open only for amenable metric spaces without property A. The most prominent spaces in this class are precisely the coarse disjoint unions of expander graphs and we use Theorem \ref{ThmBijCorExp} to add them to this list in  \S\ref{sec:Roe}: 

\begin{theorem}\label{ThmIsoURAImpliesBijCoarseEquivExp}
 Let $X$ and $Y$ be coarse disjoint unions of expander graphs. If $\cstu(X)$ and $\cstu(Y)$ are isomorphic, then $X$ and $Y$ are bijectively coarsely equivalent. 
\end{theorem}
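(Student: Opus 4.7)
The plan is to combine \cite[Theorem 1.2]{BaudierBragaFarahKhukhroVignatiWillett2021uRaRig}, which promotes any isomorphism $\Phi\colon \cstu(X)\to \cstu(Y)$ into a coarse equivalence $f\colon X\to Y$, with our Theorem \ref{ThmBijCorExp}. It suffices to verify that $f$, possibly after a bounded modification, satisfies condition \eqref{ThmBijCorExpItem2} of Theorem \ref{ThmBijCorExp}; then (1) of that theorem yields the bijective coarse equivalence.

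First I would exploit the coarse disjoint union structure. Since each $X_n$ is a connected graph of bounded degree and the components $Y_m$ are eventually arbitrarily far apart, $f(X_n)$ lies within bounded distance of a single $Y_{i(n)}$ for all sufficiently large $n$; perturbing $f$ by a bounded amount on each such $X_n$ yields $f(X_n)\subseteq Y_{i(n)}$. A symmetric argument for a coarse inverse of $f$ gives the reverse assignment, and together these show that $i$ is a bijection between cofinite subsets $N,M\subseteq \N$, and that $f(X_n)\subseteq Y_{i(n)}$ for all $n\in N$.

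The second step matches the sizes $|X_n|=|Y_{i(n)}|$ using the algebraic content of $\Phi$. Every $\cstar$-isomorphism maps the compact ideal (the closed ideal generated by minimal projections) onto the compact ideal, so $\Phi$ restricts to an isomorphism $\cK(\ell_2(X))\to \cK(\ell_2(Y))$ and therefore preserves the rank of finite-rank projections. Let $p_n\in \cstu(X)$ be the projection onto $\ell_2(X_n)$; this operator has propagation $\diam(X_n)<\infty$, hence lies in $\cstu(X)$. Define $q_m\in\cstu(Y)$ analogously. The crux is to show $\Phi(p_n)=q_{i(n)}$ for cofinitely many $n$. This will rest on the spatial nature of $\Phi$ exhibited inside the proof of \cite[Theorem 1.2]{BaudierBragaFarahKhukhroVignatiWillett2021uRaRig}: up to a controlled error, $\Phi$ is implemented by a unitary $u$ whose associated coarse equivalence is $f$, so the range of $\Phi(p_n)=u p_n u^*$ is approximately $\ell_2(f(X_n))\subseteq \ell_2(Y_{i(n)})$. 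Combined with the analogous inclusion extracted from $\Phi^{-1}$ via the coarse inverse of $f$, this forces $\Phi(p_n)=q_{i(n)}$; rank-preservation then gives $|X_n|=|Y_{i(n)}|$.

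Finally, the balance of finite tails $\sum_{n\notin N}|X_n|=\sum_{n\notin M}|Y_n|$ is handled by shrinking $N$ (if needed) to the cofinite set where the previous step holds and considering the projection $P=\sum_{n\in N}p_n\in\cstu(X)$, whose complement $1-P$ has finite rank $\sum_{n\notin N}|X_n|$. Since $\Phi(P)=\sum_{n\in N}q_{i(n)}=\sum_{m\in M}q_m$, the complement $1-\Phi(P)$ has rank $\sum_{n\notin M}|Y_n|$, and rank-preservation on $\cK(\ell_2(X))$ gives the desired equality. Theorem \ref{ThmBijCorExp} then produces a bijective coarse equivalence between $X$ and $Y$. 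The main obstacle is the third paragraph: converting the abstract isomorphism $\Phi$ into a sufficiently sharp spatial statement at the level of the component projections $p_n$ and $q_{i(n)}$, which requires careful use of the rigidity machinery of \cite{BaudierBragaFarahKhukhroVignatiWillett2021uRaRig}.
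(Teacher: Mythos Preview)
Your overall strategy---feed the coarse equivalence $f$ produced by \cite[Theorem 1.2]{BaudierBragaFarahKhukhroVignatiWillett2021uRaRig} into Theorem~\ref{ThmBijCorExp}---is exactly what the paper does, and your second paragraph (extracting the bijection $i$ from the coarse geometry of $f$ and its inverse) is fine.

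The genuine gap is in the third paragraph, and it is more serious than you flag. The rigidity theorem gives you a unitary $u$ implementing $\Phi$ and an $f$ with $\inf_x|\langle u\delta_x,\delta_{f(x)}\rangle|>0$. This says each $u\delta_x$ has \emph{some} overlap with $\delta_{f(x)}$; it does \emph{not} say $u\delta_x$ is mostly supported in $\ell_2(Y_{i(n)})$, so the range of $\Phi(p_n)=u(\ell_2(X_n))$ need not be close to $\ell_2(Y_{i(n)})$ in any useful sense. In particular the exact equality $\Phi(p_n)=q_{i(n)}$ you aim for can simply fail (perturb $u$ by a unitary that mixes finitely many $Y_m$'s), and your tail argument in the fourth paragraph, which sums the equalities to get $\Phi(P)=\sum_{m\in M}q_m$, collapses with it. What one can hope for is $\|\Phi(p_n)-q_{i(n)}\|<1$ eventually, which suffices for rank equality, but your spatial heuristic does not deliver even this.

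The paper circumvents this by working in the uniform Roe corona $\roeq(\cdot)$. The key new input is Theorem~\ref{T.Q(X).CentralProjection}: central projections in $\roeq(X)$ are exactly the images of the $\chi_{X_A}$, which in turn rests on identifying the center of $\roeq(X)$ with the Higson corona algebra \cite[Proposition 3.6]{BaudierBragaFarahVignatiWillett2023}. Since $\Phi$ descends to an isomorphism of coronas and preserves centers, one gets for every $A\subseteq\bbN$ some $A'$ with $\Phi(\chi_{X_A})-\chi_{Y_{A'}}$ compact; a pigeonhole argument (Lemma~\ref{L.piqi}) then upgrades this to $\|\Phi(\chi_{X_n})-\chi_{Y_{i(n)}}\|<\varepsilon$ for all large $n$, yielding both the rank equalities and (via a separate rank-counting argument on the finite projection $1-P$) the tail balance. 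This corona/central-projection step is the missing idea in your outline.
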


Theorem \ref{ThmIsoURAImpliesBijCoarseEquivExp} is not just a corollary of Theorem \ref{ThmBijCorExp}. In order to prove it, we must also obtain a tight result about the structure of isomorphisms between coarse disjoint unions: we refer the reader to Theorem \ref{T.|Xn|=|Yn|} for details. Here, we simply mention that the proof of our structural result  depends on an analysis of \emph{uniform Roe coronas}, i.e., the quotient of uniform Roe algebras by the ideal of compact operators, which we denote by $\roeq(X)$. Consequently,   the \emph{Higson corona} will also play an important role  since, as has been recently shown in \cite[Proposition 3.6]{BaudierBragaFarahVignatiWillett2023},  the center of $\roeq(X)$ is canonically isomorphic to the algebra of continuous functions on the Higson corona of $X$.

 \subsection*{Outline of the paper}
 
 In \S\ref{SectionUULHomology}, we  describe the uniformly finite homology groups of u.l.f.\ metric spaces and use the methods of \cite{Whyte1999Duke} to derive the  technical results we need; the main result is Theorem \ref{ThmThm76Whyte1}, which characterizes when  a uniformly finite-to-one coarse map $X\to Y$ is close to an injective coarse map, where $X$ is a u.l.f.\ space and $Y=\bigsqcup_n Y_n$ is a coarse disjoint union of $t$-connected finite metric spaces (not necessarily expander graphs).  The characterization we arrive at is purely geometric / combinatorial, and does not require uniformly finite homology for its statement; however, homological methods are important for the proof.   
 
 We then apply those results in \S\ref{SectionMainResults} to sequences of expander graphs and obtain Theorem \ref{ThmBijCorExp}, as well as a ``one-sided''  version of it for injective maps (Theorem \ref{ThmINJCorExp} below).  The results of \S\ref{SectionMainResults} do not require homological methods directly: instead they use the criteria established in Theorem \ref{ThmThm76Whyte1}.  Indeed, the proof is essentially combinatorial, with the simple observation of Lemma  \ref{LemmaEasy} being a key ingredient.  
 
 In \S\ref{sec:Roe} we apply our results to uniform Roe algebras.  The main result is Theorem \ref{T.|Xn|=|Yn|} which shows that an isomorphism between uniform Roe algebras of coarse disjoint unions automatically comes from a coarse equivalence satisfying condition \eqref{ThmBijCorExpItem2} from Theorem \ref{ThmBijCorExp}.  To establish Theorem \ref{T.|Xn|=|Yn|}, we use material on uniform Roe coronas as studied by three of the authors in \cite{BragaFarahVignati2018AdvMath}, and previous results of the authors on the Higson corona from \cite[Proposition 3.6]{BaudierBragaFarahVignatiWillett2023}.

 \subsection*{Graph conventions}
 
 We will work with graphs considered as metric spaces.  All graphs are simple, unweighted, and undirected. We equip the vertex set of a graph with the \emph{shortest path metric} whereby the distance between two vertices is the smallest number of edges in a path between them (and infinity if no such path exists).  We will typically identify the graph with its vertex set equipped with this metric.

\section{Uniformly finite homology}\label{SectionUULHomology}

In this section, we describe the homology groups introduced in \cite{Whyte1999Duke}, and then derive the technical lemmas needed for our main results.

 Given a u.l.f.\ metric space $X$, let $C_0^{\textrm{uf}}(X)$ denote the vector space of formal sums of the form 
\[
a=\sum_{x\in X}a_x\cdot x,
\]
where $(a_x)_{x\in X}\in \ell_\infty(X,\Z)$,\footnote{Here $\ell_\infty(S,\Z)$ refers to the ring of bounded functions from a set $S$ to the integers, with pointwise operations.} and let $C_1^{\textrm{uf}}(X)$ be the vector space of formal sums of the form 
\[
a=\sum_{(x,z)\in X^2}a_{x,z}\cdot (x,z),
\]
such that $(a_{x,z})_{(x,z)\in X^2}\in \ell_\infty(X^2,\Z)$ and $\sup\{d(x,z)\mid a_{x,z}\neq 0\}<\infty$. There is a canonical boundary map $ \barh \colon C_1^{\textrm{uf}}(X)\to C_0^{\textrm{uf}}(X)$ determined by letting 
\[
\barh (1\cdot (x,z))=1\cdot x-1\cdot z,
\]
for all $(x,z)\in X^2$, and extending $\barh $ to the whole of $C_1^{\textrm{uf}}(X)$ linearly. 

\begin{definition}
Let $X$ be a u.l.f.\ metric space. The \emph{uniformly finite homology group of $X$} if defined as 
\[
H_0^{\textrm{uf}}(X)=C_0^{\textrm{uf}}(X)/\barh(C_1^{\textrm{uf}}(X)).
\]
For an element $\phi\in C_0^{\textrm{uf}}(X)$, we write $[\phi]$ for the homology class in $H_0^{\textrm{uf}}(X)$ that it represents. For a subset $A$ of $X$, we write $[A]\in H_0^{\textrm{uf}}(X)$ for the homology class associated to $\sum_{x\in A}1\cdot x$.
\end{definition}
 
Uniformly finite homology groups were first introduced in \cite{Block:1992qp}. We also refer the reader to \cite[Section 7.2]{NowakYuBook} for a textbook exposition. 
 
A map $f\colon X\to Y$ is \emph{uniformly finite-to-one} if there is $C>0$ such that the cardinality of $f^{-1}(\{y\})$ is at most $C$ for all $y\in Y$. Notice that if $f\colon X\to Y$ is coarse and uniformly finite-to-one, then $f$ canonically induces a map $f_*\colon H_0^{\textrm{uf}}(X)\to H_0^{\textrm{uf}}(Y)$. Indeed, since $f$ is uniformly finite-to-one, the map 
\begin{align*}
\sum_{x\in X} a_x\cdot x\in C_0^{\textrm{uf}}(X) \mapsto \sum_{x\in X} a_x\cdot f(x)\in C_0^{\textrm{uf}}(Y)
\end{align*}
 is well-defined. Moreover, as $f$ is coarse, this map takes $ \barh(C_1^{\textrm{uf}}(X))$ into $ \barh(C_1^{\textrm{uf}}(Y))$. Therefore, $f$ induces the map 
\begin{align*}
f_*\colon H_0^{\textrm{uf}}(X)& \to H_0^{\textrm{uf}}(Y)\\
\Bigg[\sum_{x\in X} a_x\cdot x\Bigg]&\mapsto \Bigg[\sum_{x\in X} a_x\cdot f(x)\Bigg].
\end{align*} 
If $g\colon X\to Y$ is uniformly finite-to-one and close to $f$ (whence also coarse), it is straightforward to check that $f_*=g_*$. 

The main result of \cite{Whyte1999Duke} gives a complete characterization of when a coarse equivalence between u.l.f.\ metric spaces is close to a coarse equivalence which is also a bijection. Precisely:

\begin{theorem} \emph{(}\cite[Theorem 1.1]{Whyte1999Duke}\emph{).}
 Let $X$ and $Y$ be u.l.f.\ metric spaces and $f\colon X\to Y$. 
 \begin{enumerate}
 \item\label{Thm11WhyteItem1} If $f$ is coarse and uniformly finite-to-one, then $f$ is close to an injective coarse map if and only if there is $Z\subseteq Y$ such that $f_*([X])=[Z]$.
 \item\label{Thm11WhyteItem2}
If $f$ is a coarse equivalence, then $f$ is close to a bijective coarse equivalence if and only if $f_*([X])=[Y]$. 
\end{enumerate}\label{Thm11Whyte}
\end{theorem}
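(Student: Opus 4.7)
The forward directions of both parts follow immediately from the material preceding the theorem: close uniformly finite-to-one coarse maps induce the same map on $H_0^{\textrm{uf}}$, and an injective coarse map $g\colon X\to Y$ with $g(X)=Z$ satisfies $g_*([X])=[g(X)]=[Z]$ directly from the definition (and $[Y]$ in the bijective case).

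For the converse of (\ref{Thm11WhyteItem1}), my plan is a Hall's marriage theorem argument combined with a flow interpretation of the $1$-chain witnessing the homological identity. Fix $\sigma\in C_1^{\textrm{uf}}(Y)$ with
\[
\alpha-\beta=\barh(\sigma),\qquad \alpha(y):=|f^{-1}(y)|,\ \beta:=\chi_Z,
\]
and let $R$ bound $d(y,y')$ over the support of $\sigma$. Form the bipartite graph $\Gamma$ on $X\sqcup Z$ by putting an edge $x\sim z$ whenever $d(f(x),z)\leq R$; since $Y$ is u.l.f.\ and $f$ is uniformly finite-to-one, $\Gamma$ has bounded degree, so by the countable Hall marriage theorem it suffices to verify $|N_\Gamma(S)|\geq|S|$ for every finite $S\subseteq X$. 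I would interpret the integer coefficients of $\sigma$ as a unit integer flow (each $(y,y')\in\supp\sigma$ being an arrow sending one unit from $y'$ to $y$ across distance $\leq R$); the identity above then says the source measure $\alpha$ can be transformed into the target measure $\beta$ by this bounded flow. Decomposing $\sigma$ into unit paths routing $\alpha$-excess into $\beta$-excess should assign each $x\in X$ to a distinct point of $Z$ within distance $\leq R$ of $f(x)$, verifying Hall's condition and yielding the required injective coarse map close to $f$.

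For the converse of (\ref{Thm11WhyteItem2}), let $h\colon Y\to X$ be a coarse inverse of $f$. Closeness of $f\circ h$ to $\mathrm{id}_Y$ together with the u.l.f.\ hypothesis on $Y$ makes $h$ automatically uniformly finite-to-one, so $h_*\circ f_*$ and $f_*\circ h_*$ act as the identity on $H_0^{\textrm{uf}}$ and in particular $h_*([Y])=[X]$. Applying (\ref{Thm11WhyteItem1}) to $f$ with $Z=Y$ produces an injective coarse map $g\colon X\to Y$ close to $f$, and applying it to $h$ with target $X$ produces an injective coarse map $g'\colon Y\to X$ close to $h$. A Schr\"oder--Bernstein back-and-forth between $g$ and $g'$ then produces a bijection $\tilde g\colon X\to Y$; because $g'\circ g$ and $g\circ g'$ are close to the respective identity maps, each Schr\"oder--Bernstein orbit is contained in a ball of uniformly bounded radius, so the bijection $\tilde g$ remains coarse and close to $f$.

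The main obstacle will be the flow-decomposition step in (\ref{Thm11WhyteItem1}): extracting a concrete matching from the algebraic identity $\alpha-\beta=\barh(\sigma)$ requires organizing $\sigma$ into unit paths that simultaneously handle possibly large cancellations in the coefficients and respect the uniform bound $R$ on path length, which is the technical heart of Whyte's original argument. Once that is in place, the bijective case in (\ref{Thm11WhyteItem2}) essentially reduces to two applications of (\ref{Thm11WhyteItem1}) plus the Schr\"oder--Bernstein step, which is routine provided one tracks the bounded displacement of $g$ and $g'$ through the back-and-forth orbits.
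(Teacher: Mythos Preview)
Your overall architecture matches the paper's (which in turn follows Whyte): Hall's marriage for the converse of~(\ref{Thm11WhyteItem1}), then K\"onig's Schr\"oder--Bernstein for~(\ref{Thm11WhyteItem2}) applied to the two injections coming from~(\ref{Thm11WhyteItem1}) for $f$ and its coarse inverse. The forward directions and the reduction of~(\ref{Thm11WhyteItem2}) to~(\ref{Thm11WhyteItem1}) are correct. Your remark that ``each Schr\"oder--Bernstein orbit is contained in a ball of uniformly bounded radius'' is not quite right (orbits can be infinite), but what actually matters is that the resulting bijection agrees pointwise with either $g$ or $(g')^{-1}$, both of which have bounded displacement from $f$; that is the observation recorded in the paper as Lemma~\ref{LemmaSchroederBersteinKonig}.

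The genuine gap is in your Hall setup for~(\ref{Thm11WhyteItem1}). Declaring the bipartite edge relation to be $d(f(x),z)\leq R$, where $R$ is the propagation of the \emph{given} witness $\sigma$, does not verify Hall's condition. Take $X=\{0,1\}\hookrightarrow Y=\mathbb{Z}$ with $f$ the inclusion, $Z=\{0,n\}$ for $n$ large, and $\sigma=\sum_{k=1}^{n-1}1\cdot(k,k+1)$; then $\alpha-\beta=\delta_1-\delta_n=\barh\sigma$ with $R=1$, yet $N_\Gamma(\{0,1\})=Z\cap[-1,2]=\{0\}$ has only one element. The issue is exactly the one you flag at the end, but with the bound misidentified: the unit paths into which $\sigma$ decomposes have bounded \emph{step} length $R$, not bounded total length, so the assertion that the decomposition ``assigns each $x\in X$ to a distinct point of $Z$ within distance $\leq R$ of $f(x)$'' is false as stated. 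Whyte's Lemma~4.2 does proceed via Hall, but one cannot simply fix an arbitrary witness $(Z,\sigma)$ and use its propagation as the matching radius; one needs either a different verification of the Hall condition or a preliminary step replacing $(Z,\sigma)$ by a witness with uniformly short transport. That really is where the work lies, and your sketch has not yet supplied it.
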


\begin{remark}
The statement of \cite[Theorem 1.1]{Whyte1999Duke} is slightly different from the one of Theorem \ref{Thm11Whyte}. Precisely, as stated, \cite[Theorem 1.1]{Whyte1999Duke} only deals with part \eqref{Thm11WhyteItem2} of Theorem \ref{Thm11Whyte} and, instead of coarse equivalences, the result is about quasi-isometries. However, its proof gives us the result stated above verbatim; we explain this here for completeness. The proof of \cite[Theorem 1.1]{Whyte1999Duke} is presented as follows: it shows that if $f\colon X\to Y$ is any uniformly finite-to-one coarse map such that $f_*([X])=[Y]$, then there is an injective map $g\colon X\to Y$ which is close to $f$ (this is \cite[Lemma 4.2]{Whyte1999Duke}). Being close to $f$, the coarseness of $f$ passes to $g$; thus we conclude part \eqref{Thm11WhyteItem1} of Theorem \ref{Thm11Whyte}. If we assume furthermore that $f$ is a quasi-isometry, then the proof of the Schr\"oder-Bernstein theorem obtained by K\"{o}nig (see \cite[Page 88]{HalmosBook}) allows Whyte to conclude that there must be a bijection $h\colon X\to Y$ which is also close to $f$. As $f$ is a quasi-isometry, so is $h$. If instead $f$ were only a coarse equivalence, the existence of $h$ is obtained in exactly the same way as in Whyte's proof and, being close to a coarse equivalence, $h$ must be a coarse equivalence also; this is part \eqref{Thm11WhyteItem2} of Theorem \ref{Thm11Whyte}.
\end{remark}

Whyte also provided a useful criterion for an element in $C^{\textrm{uf}}_0(X)$ to be a boundary. To state it, a definition is necessary: if $A$ is a subset of a metric space $(X,d_X)$ and $r>0$, we write \[\ri_r(A)\coloneqq \{x\in X\setminus A\mid d_X(x,A)\leq r\}\] for the \emph{(outer) $r$-boundary of $A$}. If $r=1$, we abbreviate this to $\ri(A)$. If $(X,d_X)$ is a graph equipped with its shortest path metric then $\ri(A)$ is the \emph{(outer) vertex-boundary} of $A$.
 
\begin{theorem} \emph{(}\cite[Theorem 7.6]{Whyte1999Duke}\emph{).}\label{T.[a]=0iff...}
 Let $X$ be a u.l.f.\ metric space and $a=\sum_{x\in X}a_x\cdot x$ be in $C_0^{\textrm{uf}}(X)$. Then, $[a]=0$ if and only if there are $t>0$ and $C>0$ such that 
 \[
 \Bigg|\sum_{x\in A}a_x\Bigg|\leq C|\ri_t(A)|
 \]
 for all finite $A\subseteq X$.
\label{Thm76Whyte}
\end{theorem}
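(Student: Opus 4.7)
The plan is to prove both directions separately: the forward direction by direct computation, and the backward direction by a compactness argument that reduces to a finite max-flow feasibility problem.

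For the forward direction, suppose $[a]=0$ and write $a=\barh(b)$ for some $b=\sum_{(x,z)\in X^2}b_{x,z}\cdot(x,z)\in C_1^{\textrm{uf}}(X)$ of propagation at most $t$ with $\|(b_{x,z})\|_\infty\leq M$. For a finite $A\subseteq X$, unwinding the boundary yields
\[
\sum_{x\in A}a_x=\sum_{x\in A,\,z\in X}b_{x,z}-\sum_{x\in X,\,z\in A}b_{x,z}=\sum_{\substack{x\in A\\z\notin A}}b_{x,z}-\sum_{\substack{x\notin A\\z\in A}}b_{x,z},
\]
since pairs with both endpoints in $A$ cancel in the difference. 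The propagation bound forces the outside endpoint in each remaining term to lie within distance $t$ of $A$, hence in $\partial_t(A)$. Uniform local finiteness provides a uniform bound $N_t$ on the number of points within distance $t$ of any given point, so there are at most $2N_t|\partial_t(A)|$ nonzero terms, each of absolute value at most $M$; this yields the inequality with $C=2MN_t$.

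For the backward direction, fix $t$ and $C$ witnessing the hypothesis and take $M=C$. In the compact product of discrete finite sets
\[
\Omega=\prod_{\{(x,z)\in X^2:\,d(x,z)\leq t\}}\{-M,-M+1,\ldots,M\},
\]
each set $K_F=\{b\in\Omega:\barh(b)(x)=a_x\text{ for all }x\in F\}$ (with $F\subseteq X$ finite) is closed, and any point in $\bigcap_{F}K_F$ is an element $b\in C_1^{\textrm{uf}}(X)$ of propagation $\leq t$ with $\barh(b)=a$. By compactness it suffices to verify the finite intersection property, equivalently that $K_F\neq\emptyset$ for every finite $F$.

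Producing such a $b$ is a finite integer flow-feasibility problem on the graph $G_F$ with vertex set $F'=F\cup\partial_t(F)$ and edges between pairs at distance $\leq t$: assign values $b_{x,z}\in\{-M,\ldots,M\}$ to the $G_F$-edges so that the induced divergence equals $a_x$ at each $x\in F$, leaving the divergence at points of $F'\setminus F$ free (but automatically in $[-MN_t,MN_t]$). Modeling this as a standard max-flow instance (auxiliary source $s$ and sink $u$, capacity $M$ on each directed $G_F$-edge, and capacity $MN_t$ on auxiliary arcs to and from $F'\setminus F$), the hypothesis $|\sum_{x\in A}a_x|\leq C|\partial_t(A)|$ for $A\subseteq F$ delivers the cut condition required by the integer max-flow min-cut theorem: for any $s$-$u$-cut whose restriction to $F$ is $A$, the $G_F$-edges crossing from $A$ to $F'\setminus A$ include at least one per vertex of $\partial_t(A)$, contributing at least $M|\partial_t(A)|\geq C|\partial_t(A)|$ to the cut capacity. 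The integer version of max-flow min-cut then furnishes an integer flow realizing the prescribed divergences, hence an element of $K_F$. The main obstacle is calibrating the auxiliary capacities so that the cuts of finite capacity in the network correspond faithfully to subsets $A\subseteq F$ and their $t$-boundaries; uniform local finiteness is what converts vertex-boundary cardinalities into edge-capacity bounds, underwriting the choice $M=C$ and closing the argument.
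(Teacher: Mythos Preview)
The paper does not give its own proof of this statement; it is quoted verbatim from \cite[Theorem 7.6]{Whyte1999Duke} and used as a black box.  Your forward direction is correct and is the standard calculation.  Your backward direction follows the same strategy as Whyte's original proof---a Tychonoff compactness reduction to a finite feasibility problem, then a Hall/max-flow argument to solve the finite problem---so you have essentially reconstructed the intended argument rather than found an alternative.

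There is, however, a genuine gap in your max-flow step.  You verify the min-cut condition only for cuts whose intersection with $F'$ is a subset $A\subseteq F$, and your last sentence concedes that cuts containing vertices of $F'\setminus F$ still need to be handled.  With your auxiliary capacities set to the finite value $MN_t$, such cuts have finite capacity and must be checked; you have not done this, and it is not automatic.  The clean fix is to give the boundary vertices $F'\setminus F$ truly free divergence (equivalently, connect them to the super-sink with unbounded capacity): then any $s$--$u$ cut containing a boundary vertex on the source side has infinite capacity, so only cuts with $S\cap F'\subseteq F$ remain, and your estimate $M|\partial_t(A)|\geq C|\partial_t(A)|\geq|\sum_{x\in A}a_x|$ disposes of those.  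This does not threaten the compactness step, because the flow you extract still has $|b_{x,z}|\leq M$ on every $G_F$-edge by construction; the divergences at $F'\setminus F$ are bounded by $MN_t$ \emph{a posteriori}, not as an input constraint.  Alternatively, invoke the Gale--Hoffman circulation theorem directly for a transshipment with prescribed divergences on $F$ and free divergences on $F'\setminus F$, which reduces the cut condition to subsets of $F$ from the outset.  Either way, once this is patched your argument goes through.
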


We now isolate some consequences of Theorems \ref{Thm11Whyte} and \ref{Thm76Whyte}. Given $t>0$, recall that a metric space $(X,d_X)$ is called \emph{$t$-connected} if for all $x,y\in X$ there are $x_0,\ldots, x_n\in X$ such that $x=x_0$, $y=x_n$, and $d_X(x_{i-1},x_i)\leq t$ for all $i\in \{1,\ldots, n\}$. Connected graphs are examples of $1$-connected metric spaces. We also need to recall the following definition. 
 
\begin{definition}\label{cdu def}
If $(X_n)_n$ is a sequence of finite metric spaces, their \emph{coarse disjoint union} is the disjoint union $\bigsqcup_n X_n$ equipped with any\footnote{As there is a choice involved here, a coarse disjoint union is not uniquely defined. However, it is not difficult to see that any two choices of metrics satisfying these conditions will be (bijectively) coarsely equivalent via the identity map.} metric $d$ that restricts to the given metric on each $X_n$  and that satisfies $d(X_n,X_m)\to \infty$ as $(n,m)\to\infty$ in $\{(n,m)\in \N^2\mid n\neq m\}$.
\end{definition}

\begin{corollary}
 Let $t_0\geq 0$. Let $X$ be a u.l.f.\ metric space and assume that $X=\bigsqcup_n X_n$ is the coarse disjoint union of $t_0$-connected finite metric spaces. Let $a=\sum_{x\in X}a_x\cdot x$ be in $C_0^{\textrm{uf}}(X)$ be such that $\sum_{x\in X_n}a_x=0$ for all $n\in\N$. The following are equivalent:
\begin{enumerate}
\item \label{a=0} $[a]=0$. 
\item \label{exists t} There exist $t>0$, $C>0$, and $n_0\in\N$ such that 
 \[
 \Bigg|\sum_{x\in A}a_x\Bigg|\leq C|\ri_t(A)| 
 \]
 for all $n\geq n_0$, and all $A\subseteq X_n$.
 \end{enumerate}
 \label{CorThm76Whyte}
\end{corollary}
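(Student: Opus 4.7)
The direction $(1)\Rightarrow(2)$ is immediate from Whyte's criterion (Theorem \ref{Thm76Whyte}): the $t,C$ it supplies make the inequality hold for every finite $A\subseteq X$, and in particular for every $A\subseteq X_n$, so any choice of $n_0$ works.

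For the substantive direction $(2)\Rightarrow(1)$, I will again reduce to Whyte's criterion. The key geometric input is that in a coarse disjoint union, for any fixed $t$ the component $X_n$ is $t$-separated from every other $X_m$ once $n$ is large enough. Let $t,C,n_0$ witness $(2)$, and enlarge $n_0$ if necessary so that $d(X_n,X_m)>t$ whenever $n\geq n_0$ and $m\neq n$. Set $F:=\bigsqcup_{n<n_0}X_n$, which is finite, and decompose $a=\tilde a+(a-\tilde a)$, where $\tilde a$ is the restriction of $a$ to $X\setminus F$. The complementary chain $a-\tilde a$ is finitely supported with total sum $\sum_{n<n_0}\sum_{x\in X_n}a_x=0$ by the standing hypothesis, so it is a uniformly finite boundary: one realizes it as $\barh\psi$ for a finite $1$-chain $\psi$ pairing positive coefficients against negative ones inside $F$ (the propagation is automatically bounded by $\mathrm{diam}(F)$). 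Therefore $[a]=[\tilde a]$, and it suffices to show $[\tilde a]=0$.

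To show $[\tilde a]=0$ via Theorem \ref{Thm76Whyte}, I take an arbitrary finite $B\subseteq X$ and set $B_n:=B\cap X_n$. Since $\tilde a$ vanishes on $F$, $\sum_{x\in B}\tilde a_x=\sum_{n\geq n_0}\sum_{x\in B_n}a_x$. Hypothesis $(2)$ gives $|\sum_{x\in B_n}a_x|\leq C|\partial_t(B_n)|$ for every $n\geq n_0$. By the choice of $n_0$, $\partial_t(B_n)\subseteq X_n$; moreover any point of $\partial_t(B_n)$ is at distance $>t$ from every $B_m$ with $m\neq n$, hence does not lie in $B$, so in fact $\partial_t(B_n)\subseteq \partial_t(B)$. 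The sets $\partial_t(B_n)$ for distinct $n\geq n_0$ lie in pairwise disjoint $X_n$'s, so summing over $n\geq n_0$ yields $|\sum_{x\in B}\tilde a_x|\leq C|\partial_t(B)|$, and Theorem \ref{Thm76Whyte} concludes $[\tilde a]=0$.

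The main obstacle I expect is just the careful bookkeeping around the finitely many ``bad'' components $X_n$ with $n<n_0$, for which $(2)$ says nothing a priori; the role of the assumption $\sum_{x\in X_n}a_x=0$ for all $n$ is precisely to let those contributions be collected into a single finitely supported boundary, leaving only the well-behaved tail to which the per-component estimate applies directly.
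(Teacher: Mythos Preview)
Your proof is correct. The core of the argument---enlarging $n_0$ so that distinct components are $t$-separated, then showing $\partial_t(B_n)\subseteq\partial_t(B)$ and summing the per-component bounds---is the same as the paper's. The difference lies only in how the finite initial block $F=\bigsqcup_{n<n_0}X_n$ is handled: the paper verifies Whyte's criterion for $a$ itself on \emph{all} finite $A\subseteq X$, absorbing the contribution of $A\cap F$ into a larger constant $C'$ (and using $t_0$-connectedness to treat the case $\partial_t(A)=\emptyset$, where $A\cap F$ must then be a union of full components with zero sum). You instead split $a=\tilde a+(a-\tilde a)$, observe that the finitely supported piece $a-\tilde a$ has total sum zero and is therefore a boundary of a finite $1$-chain, and apply Whyte's criterion only to $\tilde a$. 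Your route is slightly cleaner and, as you may have noticed, makes no use of the $t_0$-connectedness hypothesis; the paper's route avoids the homological splitting and works with a single chain throughout.
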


\begin{proof}
The implication \eqref{a=0} $\Rightarrow$ \eqref{exists t} is (a special case of) Theorem~\ref{T.[a]=0iff...}. 

We now show \eqref{exists t} $\Rightarrow$ \eqref{a=0}. Fix $t$, $C>0$ and $n_0\in\N$  as in the statement. Since $|\partial_s(A)|\leq|\partial_t(A)|$ whenever $s\leq t$, we can assume that $t\geq t_0$. As $X=\bigsqcup_nX_n$ is a coarse disjoint union, replacing $n_0$ by a larger number if necessary, we can assume furthermore that $\ri_t(X_n)=\emptyset$ for all $n\geq n_0$. We claim that there is $C'>0$ such that 
\begin{equation}\label{EqCorThm76Whyte1}
\Bigg|\sum_{x\in A}a_x\Bigg|\leq C'|\ri_{t}(A)|
\end{equation}
for all $A\subseteq \bigsqcup_{n=1}^{n_0}X_n$. Indeed, since $Z\coloneqq \bigsqcup_{n=1}^{n_0}X_n$ is finite, letting $C’\coloneqq \sum_{x\in Z} |a_x|$ 
we have that \eqref{EqCorThm76Whyte1} holds for all $A\subseteq Z$ such that $\ri_t(A)\neq\emptyset$. On the other hand, if $A\subseteq Z$ is such that $\ri_t(A)=\emptyset$, the fact that each $X_n$ is $t$-connected implies that $A\cap X_n$ is either $\emptyset$ or $X_n$ for each $n\leq n_0$. Therefore, since $\sum_{x\in X_n}a_x=0$ for all $n\in\N$, \eqref{EqCorThm76Whyte1} holds for all $A\subseteq Z$.
 
Let now $A\subseteq X$ be an arbitrary finite subset and let $A_n=A\cap X_n$ for all $n\in\N$. Let $A'=\bigsqcup_{n=1}^{n_0}A_n$ and note that as $\ri_t(X_n)=\emptyset$ for all $n\geq n_0$, we have that
 \[
 \ri_t(A)=\ri_t(A')\sqcup\bigsqcup_{n>n_0}\ri_t(A_n).
 \] 
 We can then conclude, using the assumption, that 
\begin{align*}
\Bigg|\sum_{x\in A}a_x\Bigg|& \leq \Bigg|\sum_{x\in A'}a_x\Bigg|+\sum_{n>n_0}\Bigg|\sum_{x\in A_n}a_x\Bigg|\\
&\leq C'|\ri_t(A')|+C\sum_{n>n_0}|\ri_t(A_n)|\\
&\leq \max\{C,C'\}\ri_t(A).
 \end{align*}
 Since this holds for all $A\subseteq X$, Theorem \ref{Thm76Whyte} implies $[a]=0$ and we are done.
\end{proof}

\begin{remark}\label{rem:graphsgivet1}
In the setting of Corollary~\ref{CorThm76Whyte}, if the $t_0$-connected spaces $X_n$ satisfy the additional condition that whenever $t>t_0$ there is $N>0$ such that \begin{equation}\label{EqBoundaryGrowth}|\partial_{t}(A)|\leq N|\partial_{t_0}(A)|\ \text{ for all }n\in\bbN \ \text{ and }\ A\subseteq X_n,\end{equation} we have that condition \eqref{exists t}  is equivalent to the stronger condition that for all $t\geq t_0$ there exist $C>0$, and $n_0\in\N$ such that 
 \[
 \Bigg|\sum_{x\in A}a_x\Bigg|\leq C|\ri_t(A)| 
 \]
 for all $n\geq n_0$, and all $A\subseteq X_n$.

Notice that condition \eqref{EqBoundaryGrowth}  is automatic for instance if  each $X_n$ is a graph with all vertices of degree at most $k$, endowed with the shortest path metric: one can take with $N=k^{t-t_0}$. Hence, if $X=\bigsqcup_n X_n$ is a coarse disjoint union of $1$-connected graphs with uniformly bounded vertex degrees, then condition~\eqref{a=0} of Corollary~\ref{CorThm76Whyte} is equivalent to  the existence of  $C>0$ and $n_0\in\N$ such that 
 \[
 \Bigg|\sum_{x\in A}a_x\Bigg|\leq C|\ri(A)| 
 \]
 for all $n\geq n_0$, and all $A\subseteq X_n$.

\end{remark}

Before stating the main result of this section, we need two elementary lemmas.  The first concerns uniformly finite homology.

\begin{lemma}\label{fin vanish inf}
Let $X$ be a u.l.f.\ metric space which is the coarse disjoint union of finite metric spaces, say $X=\bigsqcup_nX_n$, and let $a=\sum_{x\in X}a_x\cdot x$ be in $ C^{\textrm{uf}}_0(X)$. Then, $[a]=0$ implies that there is $n_0\in\N$ such that 
\begin{equation}\label{sums}
\sum_{n<n_0}\sum_{x\in X_n}a_x=0\ \text{ and }\ \sum_{x\in X_n}a_x=0\ \text{ for all }\ n\geq n_0.
\end{equation}
\end{lemma}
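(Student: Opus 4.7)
The plan is to unwind the hypothesis $[a]=0$ directly: it means $a=\barh(b)$ for some $b=\sum_{(x,z)\in X^2} b_{x,z}\cdot(x,z)\in C_1^{\textrm{uf}}(X)$, whose finite propagation I will call
\[
R\coloneqq \sup\{d_X(x,z)\mid b_{x,z}\neq 0\}<\infty.
\]
From the definition of $\barh$, the coefficient of each $y\in X$ in $a$ is
\[
a_y=\sum_{z\in X}b_{y,z}-\sum_{x\in X}b_{x,y},
\]
and each of these sums is finite, since $X$ is u.l.f.\ and only terms with $d(y,\cdot)\leq R$ can contribute.

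The key geometric input is the coarse-disjoint-union property: $d(X_n,X_m)\to\infty$ as $(n,m)\to\infty$ in $\{(n,m)\in\N^2\mid n\neq m\}$. I use this to pick $n_0\in\N$ so large that whenever $n\neq m$ and $\max(n,m)\geq n_0$, we have $d(X_n,X_m)>R$. With this $n_0$, any pair $(x,z)$ in the support of $b$ that meets some $X_n$ with $n\geq n_0$ must lie entirely in $X_n$, and any pair in the support of $b$ that meets the finite block $Y\coloneqq \bigsqcup_{n<n_0}X_n$ must lie entirely in $Y$.

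Given this, both conclusions fall out by a telescoping sum. For $n\geq n_0$, I sum $a_y$ over $y\in X_n$ and interchange the order of summation (all sums are finite, since $X_n$ is finite and only points within distance $R$ of $X_n$ contribute): the confinement observation gives
\[
\sum_{y\in X_n}a_y=\sum_{y,z\in X_n}b_{y,z}-\sum_{x,y\in X_n}b_{x,y}=0,
\]
since the two sums are identical after relabeling. The same computation, performed with $X_n$ replaced by the finite set $Y$, produces $\sum_{n<n_0}\sum_{x\in X_n}a_x=\sum_{y\in Y}a_y=0$, which is the remaining equality in \eqref{sums}.

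There is no real obstacle here; the only care needed is in extracting $n_0$ from the coarse-disjoint-union condition in the correct uniform form (namely, $d(X_n,X_m)>R$ whenever $n\neq m$ and at least one of $n,m$ is $\geq n_0$, so that no edge in the support of $b$ can connect a large-index piece to anything else, and no edge can cross from $Y$ to a piece outside $Y$). Once this is in hand, the interchange of summation and the cancellation are mechanical.
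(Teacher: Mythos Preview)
Your proof is correct and is essentially the same as the paper's: both lift $a$ to some $b\in C_1^{\textrm{uf}}(X)$ with propagation $R$, choose $n_0$ so that distinct pieces $X_n$, $X_m$ with $\max(n,m)\geq n_0$ are more than $R$ apart, and then observe that the resulting confinement of the support of $b$ makes each sum $\sum_{y\in X_n}a_y$ (for $n\geq n_0$) and $\sum_{y\in Y}a_y$ telescope to zero. Your write-up is a bit more explicit about the coefficient formula and the relabeling, but the argument is identical.
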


\begin{proof}
If $[a]=0$, then $a= \barh b$ for some $b=\sum_{(x,z)\in X^2}b_{x,z}\cdot (x,z)$ in $ C_1^{\textrm{uf}}(X)$. By the definition of $C_1^{\textrm{uf}}(X)$, $r= \sup\{d_X(x,z)\mid b_{x,z}\neq 0\}$ is finite. Hence, there is $n_0\in \N$ such that for all distinct $n,m\in\N$, with $n\geq n_0$, all $x\in X_m$, and all $z\in X_n$, we have $d_X(x,z)>r$. Therefore, if $b_{x,z}\neq 0$, this implies that either $x,z\in \bigsqcup_{n < n_0}X_n$ or there is $n\geq n_0$ such that $x,z\in X_n$. This shows that 
\[
 \barh\Bigg(\sum_{(x,z)\in(\bigsqcup_{n<n_0} X_n)^2}b_{x,z}\cdot(x,z)\Bigg)=\sum_{n<n_0}\sum_{x\in X_n}a_x\cdot x 
\]
and 
\[
 \barh \Bigg(\sum_{(x,z)\in X_n}b_{x,z}\cdot(x,z)\Bigg)=\sum_{x\in X_n}a_x\cdot x
\]
for all $n\geq n_0$. The result follows from the observation that each sum in line \eqref{sums} consists of a sum of terms of the form $b_{x,z}$, where each $b_{x,z}$ appears twice with opposite signs.
\end{proof}

The next elementary lemma does not depend on the uniformly finite homology methods detailed above.
 
\begin{lemma}
 Let $X$ and $Y$ be u.l.f.\ metric spaces and suppose $Y=\bigsqcup_nY_n$ is a coarse disjoint union of finite metric spaces. If $f\colon X\to Y$ is a uniformly finite-to-one coarse map that is close to an injective map, then there is $n_0\in \N$ such that 
 \[\sum_{n<n_0}|f^{-1}(Y_n)|\leq \sum_{n<n_0}|Y_n| \ \text{ and } \ |f^{-1}(Y_n)|\leq |Y_{n}|\ \text{ for all } \ n\geq n_0.\]
 \label{LemmaInjCoarseClose}
\end{lemma}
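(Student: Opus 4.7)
The plan is to exploit the injection $g\colon X\to Y$ close to $f$ together with the fact that, in a coarse disjoint union, distinct pieces are eventually arbitrarily far apart. I would begin by fixing such a $g$ and setting $R := \sup_{x\in X} d_Y(f(x),g(x))$, which is finite by hypothesis. Using that $d_Y(Y_n,Y_m) \to \infty$ as $(n,m)\to\infty$ in $\{(n,m)\in\N^2 \mid n\neq m\}$, I would then pick $n_0$ large enough that $d_Y(Y_n, Y_m) > R$ whenever $n \geq n_0$ and $m \neq n$; this is possible since the set $\{(n,m)\in\N^2 : n\neq m,\ d_Y(Y_n,Y_m)\leq R\}$ is finite.

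With this $n_0$ in hand, the second inequality is essentially immediate. Fix $n \geq n_0$ and $x \in f^{-1}(Y_n)$, and let $m \in \N$ be the unique index with $g(x) \in Y_m$. Since $f(x) \in Y_n$ and $d_Y(f(x),g(x)) \leq R$, we must have $m = n$: otherwise $d_Y(Y_n, Y_m) \leq R$, contradicting the choice of $n_0$. Hence $g(f^{-1}(Y_n)) \subseteq Y_n$, and injectivity of $g$ gives $|f^{-1}(Y_n)| \leq |Y_n|$.

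The first inequality follows by a symmetric argument. If $x \in f^{-1}(\bigsqcup_{n<n_0} Y_n)$ and $g(x)\in Y_m$ with $m \geq n_0$, then $f(x)\in Y_n$ for some $n < n_0 \leq m$, so $m \neq n$ and $d_Y(Y_n, Y_m) \leq d_Y(f(x),g(x)) \leq R$, again contradicting the choice of $n_0$. Therefore $g$ maps $f^{-1}(\bigsqcup_{n<n_0} Y_n)$ into $\bigsqcup_{n<n_0} Y_n$, and injectivity together with disjointness yields
\[
\sum_{n<n_0}|f^{-1}(Y_n)| \;=\; \bigl|f^{-1}\bigl(\textstyle\bigsqcup_{n<n_0} Y_n\bigr)\bigr| \;\leq\; \sum_{n<n_0}|Y_n|.
\]

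There is essentially no hard step here; the only point requiring a moment's care is the correct extraction of $n_0$ from the coarse disjoint union hypothesis, after which the whole lemma reduces to a single application of the triangle inequality followed by counting using injectivity of $g$. Note that finiteness of each $f^{-1}(Y_n)$, needed to make the inequalities meaningful, is automatic from $Y_n$ being finite and $f$ being uniformly finite-to-one.
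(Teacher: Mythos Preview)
Your proof is correct and follows essentially the same approach as the paper: fix an injection $g$ close to $f$, use the coarse disjoint union hypothesis to find $n_0$ such that $g(f^{-1}(Y_n))\subseteq Y_n$ for $n\geq n_0$ and $g\bigl(\bigsqcup_{n<n_0}f^{-1}(Y_n)\bigr)\subseteq \bigsqcup_{n<n_0}Y_n$, and conclude by injectivity. Your write-up is simply more explicit about extracting $n_0$ from the finiteness of $\{(n,m):n\neq m,\ d_Y(Y_n,Y_m)\leq R\}$, which the paper leaves implicit.
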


\begin{proof}
 Suppose that $g\colon X\to Y$ is an injective map which is close to $f$. As $g$ is close to $f$ there is $c\in(0,\infty)$ such that $\sup_{x\in f^{-1}(Y_n)}d_Y(g(x),Y_n)<c$. Since $Y=\bigsqcup_nY_n$ is a coarse disjoint union, there is $n_0\in\N$ such that \[ g(f^{-1}(Y_n))\subseteq Y_n \text{ for all }\ n\geq n_0,\] and necessarily \[ g\Bigg(\bigsqcup_{n<n_0}f^{-1}(Y_n)\Bigg)\subseteq \bigsqcup_{n<n_0}Y_n.\]
As $g$ is injective, the result follows. 
\end{proof}

The next result is our main technical result of this section. 

\begin{theorem}
Let $t_0>0$. Let $X$ and $Y$ be u.l.f.\ metric spaces and suppose $Y=\bigsqcup_nY_n$ is a coarse disjoint union of $t_0$-connected finite metric spaces. Let $f\colon X\to Y$ be a uniformly finite-to-one coarse map. The following are equivalent.
\begin{enumerate}
\item \label{CorThm76Whyte1Item1} The map $f$ is close to an injective coarse map.
\item \label{CorThm76Whyte1Item2} There are $Z\subseteq Y$, $t\geq t_0$, $C>0$, and $n_0\in\N$ such that 
 \[\sum_{n<n_0}|f^{-1}(Y_n)|\leq \sum_{n<n_0}|Y_n| \ \text{ and } \ |f^{-1}(Y_n)|\leq |Y_{n}|\ \text{ for all } \ n\geq n_0\] and such that 
\[||A\cap Z|-|f^{-1}(A)||\leq C|\ri_{t}(A)|\]
 for all $n\geq n_0$ and all $A\subseteq Y_n$.
\end{enumerate} 
 \label{ThmThm76Whyte1}
\end{theorem}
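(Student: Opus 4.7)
The plan is to combine the homological characterization of Theorem~\ref{Thm11Whyte}\eqref{Thm11WhyteItem1} with the boundary criteria from Theorem~\ref{Thm76Whyte} and Corollary~\ref{CorThm76Whyte}. Throughout, associate to any $W\subseteq Y$ the chain $a(W):=\sum_{y\in Y}(|f^{-1}(y)|-\chi_W(y))\cdot y\in C_0^{\textrm{uf}}(Y)$, noting that $[a(W)]=0$ in $H_0^{\textrm{uf}}(Y)$ is equivalent to $f_*[X]=[W]$.

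For \eqref{CorThm76Whyte1Item1}$\Rightarrow$\eqref{CorThm76Whyte1Item2}, let $g\colon X\to Y$ be an injective coarse map close to $f$ and take $Z:=g(X)$. The cardinality conditions follow directly from Lemma~\ref{LemmaInjCoarseClose}. Theorem~\ref{Thm11Whyte}\eqref{Thm11WhyteItem1} gives $f_*[X]=[Z]$, i.e. $[a(Z)]=0$, and Theorem~\ref{Thm76Whyte} converts this into the desired estimate $\bigl||A\cap Z|-|f^{-1}(A)|\bigr|\leq C|\partial_t(A)|$ for all finite $A\subseteq Y$, after possibly enlarging $t$ so that $t\geq t_0$.

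For \eqref{CorThm76Whyte1Item2}$\Rightarrow$\eqref{CorThm76Whyte1Item1}, by Theorem~\ref{Thm11Whyte}\eqref{Thm11WhyteItem1} it suffices to produce $Z'\subseteq Y$ with $f_*[X]=[Z']$. The given $Z$ need not work directly, because applying Corollary~\ref{CorThm76Whyte} requires $\sum_{y\in Y_n}a(Z')_y=0$ for every $n$, which is more than what \eqref{CorThm76Whyte1Item2} controls. So I modify $Z$ on finitely many pieces. Pick $n_1\geq n_0$ with $\partial_t(Y_n)=\emptyset$ for all $n\geq n_1$; specializing \eqref{CorThm76Whyte1Item2} to $A=Y_n$ then forces $|Z\cap Y_n|=|f^{-1}(Y_n)|$ for such $n$. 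Define $Z'$ piecewise: take $Z'\cap Y_n:=Z\cap Y_n$ when $n\geq n_1$; for each $n_0\leq n<n_1$, pick any subset of $Y_n$ of size $|f^{-1}(Y_n)|$ (feasible by the per-piece cardinality bound in \eqref{CorThm76Whyte1Item2}); and within $\bigsqcup_{n<n_0}Y_n$, pick any subset of total size $\sum_{n<n_0}|f^{-1}(Y_n)|$ (feasible by the aggregate cardinality bound). By construction $\sum_{y\in Y_n}a(Z')_y=0$ for every $n\geq n_0$ and $\sum_{n<n_0}\sum_{y\in Y_n}a(Z')_y=0$.

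Now split $a(Z')=a_{\text{sm}}+a_{\text{lg}}$, where $a_{\text{sm}}$ is the restriction of $a(Z')$ to $\bigsqcup_{n<n_0}Y_n$. Since $a_{\text{sm}}$ has finite support and total sum zero, it is a boundary in $C_1^{\textrm{uf}}(Y)$: pair positive and negative contributions into finitely many 1-chains $(p,q)$, each of which automatically has finite (hence bounded) propagation. The part $a_{\text{lg}}$, supported on the coarse disjoint union $\bigsqcup_{n\geq n_0}Y_n$ of $t_0$-connected finite spaces, satisfies the hypothesis of Corollary~\ref{CorThm76Whyte}: for $n\geq n_1$ the boundary estimate transfers verbatim from \eqref{CorThm76Whyte1Item2} because $Z'$ agrees with $Z$ on $Y_n$; for the finitely many $n$ with $n_0\leq n<n_1$, use the crude bound $|\sum_{y\in A}a(Z')_y|\leq |Y_n|+|f^{-1}(Y_n)|$ together with $|\partial_t(A)|\geq 1$ for any $\emptyset\neq A\subsetneq Y_n$ (which follows from $t_0$-connectedness of $Y_n$ and $t\geq t_0$), both bounded uniformly over this finite range. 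Corollary~\ref{CorThm76Whyte} then yields $[a_{\text{lg}}]=0$, hence $[a(Z')]=0$, i.e., $f_*[X]=[Z']$, and Theorem~\ref{Thm11Whyte}\eqref{Thm11WhyteItem1} concludes the proof.

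The main obstacle is the construction of $Z'$: because only an aggregate, not per-piece, cardinality bound is available on the first $n_0$ pieces, mass must be redistributed there, and after redistribution the boundary estimate has to be re-verified on the intermediate range $n_0\leq n<n_1$. This verification is routine precisely because only finitely many pieces are involved and $t_0$-connectedness guarantees a non-trivial outer $t$-boundary for every proper non-empty subset of $Y_n$.
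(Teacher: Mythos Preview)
Your proof is correct; both directions go through. The argument for \eqref{CorThm76Whyte1Item1}$\Rightarrow$\eqref{CorThm76Whyte1Item2} matches the paper's (one minor point: Theorem~\ref{Thm11Whyte}\eqref{Thm11WhyteItem1} only gives the existence of \emph{some} $Z$ with $f_*[X]=[Z]$; that your particular choice $Z=g(X)$ works follows directly from $f_*=g_*$ and injectivity of $g$, not from the cited theorem).

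For \eqref{CorThm76Whyte1Item2}$\Rightarrow$\eqref{CorThm76Whyte1Item1} you take a genuinely different route from the paper. The paper first enlarges $n_0$ so that $\partial_t(Y_n)=\emptyset$ for all $n\geq n_0$, then \emph{splits the map} rather than the chain: on the finite piece $X\setminus X'\to Y\setminus Y'$ it produces an injection directly from the cardinality inequality (no homology), and on the cofinite piece it shows $[a^{(1)}]=0$ in $H_0^{\textrm{uf}}(Y')$ and applies Theorem~\ref{Thm11Whyte}\eqref{Thm11WhyteItem1} to $f\restriction X'$. You instead keep $f$ global and \emph{modify $Z$ to $Z'$} so that the per-piece sums of $a(Z')$ vanish, then verify $[a(Z')]=0$ in $H_0^{\textrm{uf}}(Y)$ via the decomposition $a_{\text{sm}}+a_{\text{lg}}$. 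Your construction of $Z'$ and the crude bound on the intermediate range $n_0\leq n<n_1$ (using $t_0$-connectedness to force $|\partial_t(A)|\geq 1$ for proper non-empty $A$, and the vanishing sum for $A=Y_n$) are sound. The paper's approach is a little more economical---it avoids introducing $n_1$ and redefining $Z$---while yours has the aesthetic advantage of invoking Theorem~\ref{Thm11Whyte}\eqref{Thm11WhyteItem1} once, on the original $f$.
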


\begin{proof}
We start by establishing notation. Given $Z\subseteq Y$, let $a^Z\in C_0^{\textrm{uf}}(Y)$ be given by \[a^Z=\sum_{y\in Z}1\cdot y-\sum_{x\in X}1\cdot f(x)\] and observe that $[a^Z]=[Z]-f_*([X])$. Now, write $a^Z=\sum_{y\in Y}a^Z_y\cdot y$, i.e., 
\begin{equation*}\label{EqFormulaAy}
a^Z_y\coloneqq \left\{\begin{array}{ll}
1-|f^{-1}(\{y\})|,& \text{ if } y\in Z,\\ 
-|f^{-1}(\{y\})|,& \text{ if } y \not\in Z.
\end{array}\right.
\end{equation*}
Hence, for all $A\subseteq Y$, we have 
 \begin{equation}\label{EqInjZEq}
 ||A\cap Z|-|f^{-1}(A)||= \Bigg|\sum_{y\in A}a^Z_y\Bigg|.
\end{equation}

 \eqref{CorThm76Whyte1Item1}$\Rightarrow$\eqref{CorThm76Whyte1Item2}: Suppose $f$ is close to an injective coarse equivalence. Firstly, notice that the existence of $n_0\in \N$ such that \[\sum_{n<n_0}|f^{-1}(Y_n)|\leq \sum_{n<n_0}|Y_n| \ \text{ and } \ |f^{-1}(Y_n)|\leq |Y_{n}|\ \text{ for all } \ n\geq n_0\]
 is given by Lemma \ref{LemmaInjCoarseClose}. Fix such $n_0$ and notice that replacing $n_0$ by a larger natural does not change this property. We must now show the existence of $Z\subseteq Y$ and $C>0$ as in the statement.
 
As $f$ is close to an injective coarse equivalence, Theorem~\ref{Thm11Whyte}\eqref{Thm11WhyteItem1} gives $Z\subseteq Y$ such that $f_*([X])=[Z]$; the latter condition implies that $[a^Z]=0$. Lemma \ref{fin vanish inf} then gives $m\in\N$ such that 
\[
\sum_{n<m}\sum_{y\in Y_n}a^Z_y=0\ \text{ and }\ \sum_{y\in Y_n}a^Z_y=0\ \text{ for all }\ n\geq m.
\]
We can then apply Corollary \ref{CorThm76Whyte} and get  $t$, $C>0$ and $m_0\in \N$ such that 
 \[
 \Bigg|\sum_{y\in A}a^Z_y\Bigg|\leq C|\partial_{t}(A)|
 \]
 for all $n\geq m_0$ and all $A\subseteq Y_n$. Replacing $m_0$ by a larger natural $m_1$ such that $\partial_t(Y_n)=\emptyset$ whenever $n\geq m_1$, we obtain, using \eqref{EqInjZEq}, that 
 \[
 ||A\cap Z|-|f^{-1}(A)|| \leq C|\partial_{t}(A)|
 \]
for all $n\geq m_1$ and all $A\subseteq Y_n$. Replace $n_0$ by $\max \{n_0,m_1\}$ and we are done.
 
 \eqref{CorThm76Whyte1Item2}$\Rightarrow$\eqref{CorThm76Whyte1Item1}: Let $Z\subseteq X$, $t\geq t_0$, $C>0$, and $n_0\in\N$ be as in the statement. Replacing $n_0$ by a larger number if necessary, we can assume that 
 \begin{equation}\label{Eq08Jun20231}
 \partial_{t}(Y_n)=\emptyset\ \text{ for all } \ n\geq n_0.
\end{equation}
 Let \[X'=\bigsqcup_{n\geq n_0}f^{-1}(Y_n)\ \text{ and }\ Y'=\bigsqcup_{n\geq n_0}Y_n\] and notice that, replacing $Z$ by $Z\cap Y'$, we can also assume that $Z\subseteq Y'$. 

Write $a^Z=a^{(0)}+a^{(1)}$, where $a^{(0)}$ is supported in $Y\setminus Y'$, and $a^{(1)}$ is supported on $ Y'$, i.e., 
\begin{equation}\label{EqFormulaAy2}
a^{(0)}=\sum_{y\in Y\setminus Y'}a^Z_y\cdot y\ \text{ and } \ a^{(1)}=\sum_{y\in Y'}a_y^Z\cdot y.
\end{equation}
 By hypothesis, $|X\setminus X'|\leq |Y\setminus Y'|$. So, there is an injective map $f^{(0)}\colon X\setminus X' \to Y\setminus Y'$.  Since the domain of $f^{(0)}$ is finite, it is automatically coarse.  Hence, to see that $f$ is close to an injective coarse map, it is enough to show that there is an injective coarse map $f^{(1)}\colon X'\to Y'$ which is close to $f\restriction X'\colon X'\to Y'$. By Theorem~\ref{Thm11Whyte}\eqref{Thm11WhyteItem1}, this will be shown once we prove that $[a^{(1)}]=0$.

From now on, we consider $a^{(1)}$ as an element of $C_0^{\textrm{uf}}(Y')$ and show that $[a^{(1)}]=0$. It follows from the formula for $a^{(1)}$ in \eqref{EqFormulaAy2} and equation \eqref{EqInjZEq} that for all $n\geq n_0$ and all $A\subseteq Y_n$, we have
\begin{equation}\label{EqInjZEq2}
 ||A\cap Z|-|f^{-1}(A)||= \Bigg|\sum_{y\in A}a_y^Z\Bigg|.
\end{equation}
The hypothesis then implies
\[
\Bigg|\sum_{y\in A}a_y^Z\Bigg|\leq C|\partial_{t}(A)|
\]
for all $n\geq m_0$ and all $A\subseteq Y_n$. Hence, as each $Y_n$ is $t_0$-connected, in order to show that $[a^{(1)}]=0$, it is enough to show that $a^{(1)}$ satisfies condition \eqref{exists t}  of Corollary \ref{CorThm76Whyte}. Fix $n\geq n_0$. By \eqref{Eq08Jun20231}, we have $\partial_{t}(Y_n)=\emptyset$. Hence, the hypothesis and \eqref{EqInjZEq2} give 
\[\Bigg|\sum_{y\in Y_n}a^Z_y\Bigg|=||Y_n\cap Z|-|f^{-1}(Y_n)||=0.\]
We can then apply Corollary \ref{CorThm76Whyte} and obtain that $[a^{(1)}]=0$ as desired. 
 \end{proof}

\section{Coarse equivalence versus bijective coarse equivalence}\label{SectionMainResults}

\subsection{Expander graphs} In this subsection, we characterize when a coarse equivalence between coarse disjoint unions of expander graphs is close to a bijective coarse equivalence, obtaining Theorem \ref{ThmBijCorExp}. 


There are several variants of the following definition in the literature: the differences generally affect the precise constants involved, but not the qualitative behavior.

\begin{definition}\label{def:expander1}
Let $k\in\N$, let $X$ be a finite graph such that each vertex is incident to at most $k$ edges, and $h>0$. We say that $X$ is a \emph{$(k,h)$-expander graph} if \[|\partial( A)|\geq h\Big(1-\frac{|A|}{|X|}\Big)|A|\]
for all $A\subseteq X$.
\end{definition}

The definition above is a ``local'' one. We are actually interested in infinite metric spaces that are obtained by ``gluing'' countably many expander graphs together in an appropriate way, i.e., a coarse disjoint union of $(k,h)$-expanders in the sense of Definition \ref{cdu def} above. We now define the main class of metric spaces under consideration here. 
 
 \begin{definition}\label{def:expander2}
Let $(X_n)_n$ be a sequence of finite graphs. We say that $(X_n)_n$ is a \emph{sequence of expander graphs} if there are $k\in\N$ and $h>0$ such that each $X_n$ is a $(k,h)$-expander graph, and such that $|X_n|\to\infty$ as $n\to\infty$. If we let $X$ be their coarse disjoint union, we simply say that \emph{$X=\bigsqcup_nX_n$ is the coarse disjoint union of expander graphs}.
\end{definition}
 
In Definition~\ref{def:expander2}, as each vertex of each $X_n$ is incident to at most $k$ edges, the space $X$ is clearly a u.l.f.\ metric space. 
 
Besides the results on uniformly finite homology of the previous section, the following combinatorial lemma is another key ingredient for our proof of Theorem \ref{ThmBijCorExp}.

 \begin{lemma}\label{LemmaEasy}
Let $X$ and $Y$ be finite sets, with $|X|=|Y|$, and $f\colon X\to Y$. Then, for all $A\subseteq Y$, we have that 
\[
||A|-|f^{-1}(A)||\leq \Big(\max_{y\in Y}|f^{-1}(\{y\})|-1\Big)\min\{|A|,|Y\setminus A|\}.
\]
\end{lemma}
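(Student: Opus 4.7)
The plan is straightforward. Write $k \coloneqq \max_{y\in Y}|f^{-1}(\{y\})|$. The key preliminary observation is that, because $|X|=|Y|$,
\[
|A| - |f^{-1}(A)| \;=\; (|Y|-|Y\setminus A|) - (|X|-|f^{-1}(Y\setminus A)|) \;=\; |f^{-1}(Y\setminus A)| - |Y\setminus A|,
\]
so the quantity $\bigl||A|-|f^{-1}(A)|\bigr|$ is invariant under the substitution $A \mapsto Y\setminus A$. This reduces the lemma to the asymmetric statement $\bigl||A|-|f^{-1}(A)|\bigr| \leq (k-1)|A|$ for every $A\subseteq Y$: once that is established, applying it to both $A$ and $Y\setminus A$ and taking the minimum will yield the stated symmetric inequality.

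The asymmetric bound itself splits into trivial cases and one main case. When $k=0$ both $X$ and $Y$ are empty; when $k=1$ the map $f$ is injective between finite sets of equal cardinality, hence bijective, so both sides of the inequality vanish. For $k\geq 2$, the two directions of the absolute value are controlled by the crudest possible estimates on fiber sizes: on the one hand,
\[
|f^{-1}(A)| \;=\; \sum_{y\in A}|f^{-1}(\{y\})| \;\leq\; k|A|,
\]
which gives $|f^{-1}(A)|-|A|\leq (k-1)|A|$; on the other hand, $|f^{-1}(A)|\geq 0$ gives $|A|-|f^{-1}(A)|\leq |A|\leq (k-1)|A|$ using $k-1\geq 1$. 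I do not anticipate any serious obstacle: the entire content of the lemma is the symmetry supplied by the equal-cardinality hypothesis combined with these two one-line fiber estimates.
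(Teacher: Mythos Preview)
Your proof is correct and follows essentially the same approach as the paper's: both establish the asymmetric bound $\bigl||A|-|f^{-1}(A)|\bigr|\leq (k-1)|A|$ from the crude fiber estimates $0\leq |f^{-1}(A)|\leq k|A|$ (after disposing of the trivial case $k\leq 1$), and then exploit the identity $\bigl||A|-|f^{-1}(A)|\bigr|=\bigl||Y\setminus A|-|f^{-1}(Y\setminus A)|\bigr|$ coming from $|X|=|Y|$ to obtain the symmetric bound. The only cosmetic difference is the order of presentation: you state the symmetry first and then prove the asymmetric estimate, whereas the paper does the reverse.
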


\begin{proof}
 Let $m:=\max_{y\in Y}|f^{-1}(\{y\})|$. We may assume that $m>1$: otherwise, $f$ is a bijection by the pigeonhole principle and so $||A|-|f^{-1}(A)||=0$ for all $A\subseteq Y$, as $X$ and $Y$ are finite and of the same size. The assumption implies that for any $A\subseteq Y$, $0\leq |f^{-1}(A)|\leq m|A|$. Hence 
\[
-|A|\leq |f^{-1}(A)|-|A|\leq m|A|-|A|.
\]
As $m>1$, this implies that 
\begin{equation}\label{with a}
||A|-|f^{-1}(A)||\leq (m-1)|A|.
\end{equation}
The same reasoning with $A$ replaced by $Y\setminus A$ implies that 
\begin{equation}\label{with x-a}
||Y\setminus A|-|f^{-1}(Y\setminus A)||\leq (m-1)|Y\setminus A|.
\end{equation}
On the other hand, as $|Y\setminus A|=|Y|-|A|$ and as 
\[
|f^{-1}(Y\setminus A)|=|f^{-1}(Y)|-|f^{-1}(A)|=|X|-|f^{-1}(A)|=|Y|-|f^{-1}(A)|
\]
we have that 
\[
||Y\setminus A|-|f^{-1}(Y\setminus A)||=||A|-|f^{-1}(A)||.
\]
Combining this with line \eqref{with x-a} gives $||A|-|f^{-1}(A)||\leq (m-1)|Y\setminus A|$, and combining that with line \eqref{with a}, we are done.
 \end{proof}

 Before proving Theorem \ref{ThmBijCorExp}, we start proving the following ``injective version'' of it.
 
\begin{theorem}\label{ThmINJCorExp}
 Let $X$ and $Y$ be u.l.f.\ metric spaces and suppose $Y=\bigsqcup_nY_n$ is a coarse disjoint union of expander graphs. Let $f\colon X\to Y$ be a uniformly finite-to-one coarse map. The following are equivalent.
\begin{enumerate}
\item \label{ThmINJCorExpItem1} The map $f$ is close to an injective coarse map.
\item \label{ThmINJCorExpItem2} There is $n_0\in\N$ such that 
 \[\sum_{n<n_0}|f^{-1}(Y_n)|\leq \sum_{n<n_0}|Y_n| \ \text{ and } \ |f^{-1}(Y_n)|\leq |Y_{n}|\ \text{ for all } \ n\geq n_0.\]
 \end{enumerate}
\end{theorem}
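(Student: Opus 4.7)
The implication $(1) \Rightarrow (2)$ is already covered by Lemma \ref{LemmaInjCoarseClose}, so the substance is in $(2) \Rightarrow (1)$. My plan is to verify the second condition of Theorem \ref{ThmThm76Whyte1} with $t_0 = t = 1$, which is legitimate since connected graphs are $1$-connected. The first half of that condition is exactly (2), so what remains is to exhibit $Z \subseteq Y$ and $C > 0$ such that
\[
\bigl||A \cap Z| - |f^{-1}(A)|\bigr| \leq C|\partial(A)|
\]
for every sufficiently large $n$ and every $A \subseteq Y_n$.

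To define $Z$, I would set $d_n := |Y_n| - |f^{-1}(Y_n)| \geq 0$, choose an arbitrary subset $W_n \subseteq Y_n$ of cardinality $d_n$, and let $Z_n := Y_n \setminus W_n$; then put $Z := \bigsqcup_{n \geq n_0} Z_n$, so that $|Z_n| = |f^{-1}(Y_n)|$. To verify the required estimate, the key device is a ghost-point trick: on each block, adjoin $d_n$ fresh points to $f^{-1}(Y_n)$ and extend $f$ to a map $\tilde{f}_n$ from a set of size $|Y_n|$ to $Y_n$ that sends the ghost points bijectively onto $W_n$. This preserves the multiplicity bound $M := \sup_{y \in Y} |f^{-1}(\{y\})|$ of $f$, and the identity $|\tilde{f}_n^{-1}(A)| = |f^{-1}(A)| + |A \cap W_n|$ translates the $Z_n$-discrepancy into the $\tilde{f}_n$-discrepancy:
\[
|A \cap Z_n| - |f^{-1}(A)| = |A| - |\tilde{f}_n^{-1}(A)|.
\]

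Applying Lemma \ref{LemmaEasy} to $\tilde{f}_n$ bounds the right-hand side in absolute value by $(M - 1)\min\{|A|, |Y_n \setminus A|\}$. The defining inequality of a $(k,h)$-expander, $|\partial(A)| \geq h(1 - |A|/|Y_n|)|A|$, together with the observation that $\max\{|A|, |Y_n \setminus A|\} \geq |Y_n|/2$, easily yields $|\partial(A)| \geq (h/2) \min\{|A|, |Y_n \setminus A|\}$. Chaining these two inequalities gives the required bound with $C := 2(M-1)/h$, and Theorem \ref{ThmThm76Whyte1} then produces an injective coarse map close to $f$.

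The main obstacle is conceptual rather than technical: the cardinality hypothesis in (2) only controls $|f^{-1}(Y_n)|$ and $|Y_n|$, while the homological characterization in Theorem \ref{ThmThm76Whyte1} requires one to pinpoint a specific subset $Z$ of $Y$. Once the ghost-point construction reduces the problem to a map between finite sets of equal cardinality, the combinatorial input of Lemma \ref{LemmaEasy} matches perfectly with the isoperimetric strength of the expanders, and the argument is essentially automatic.
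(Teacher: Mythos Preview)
Your argument is correct and follows the same path as the paper's: pick $Z_n\subseteq Y_n$ with $|Z_n|=|f^{-1}(Y_n)|$, use Lemma~\ref{LemmaEasy} to bound $\bigl||A\cap Z_n|-|f^{-1}(A)|\bigr|$ by a multiple of $\min\{|A|,|Y_n\setminus A|\}$, and then invoke the expander inequality, so that Theorem~\ref{ThmThm76Whyte1} finishes.  Two small remarks.  First, your claim that the ghost-point extension $\tilde f_n$ still has multiplicity at most $M$ is not automatic for an \emph{arbitrary} $W_n$: if $W_n$ meets $f(f^{-1}(Y_n))$ then some fibres can grow to size $M+1$.  This is harmless---either choose $W_n$ inside $Y_n\setminus f(f^{-1}(Y_n))$, which always has at least $d_n$ points, or simply carry $M+1$ through the estimate.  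The paper takes the dual route: rather than enlarging the domain it shrinks the codomain, choosing $Z_n\supseteq f(f^{-1}(Y_n))$ and applying Lemma~\ref{LemmaEasy} directly to $f\restriction f^{-1}(Y_n)\colon f^{-1}(Y_n)\to Z_n$.  Second, your symmetric rewriting $|\partial(A)|\geq h\,|A|\,|Y_n\setminus A|/|Y_n|\geq (h/2)\min\{|A|,|Y_n\setminus A|\}$ is slightly cleaner than the paper's argument, which proceeds by contradiction, splits into the cases $|A|\leq |Y_n|/2$ and $|A|>|Y_n|/2$, and in the second case uses the degree bound $|\partial(Y_n\setminus A)|\leq k\,|\partial(A)|$.
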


 \begin{proof}
 The implication \eqref{ThmINJCorExpItem1}$\Rightarrow$\eqref{ThmINJCorExpItem2} follows immediately from Lemma \ref{LemmaInjCoarseClose} (and does not need any expander assumptions). So, we only show \eqref{ThmINJCorExpItem2}$\Rightarrow$\eqref{ThmINJCorExpItem1}. For that, let $n_0\in\N$ be as in the statement.

 As $(Y_n)_n$ is a sequence of expander graphs, there are $k\in\N$ and $h>0$ such that each $Y_n$ is a graph with all vertices having degree at most $k$ and such that
\[|\partial (A)|\geq h|A|\]
for all $n\in\N$ and all $A\subseteq Y_n$ with $|A|\leq |Y_n|/2$. As $f$ is uniformly finite-to-one, there is $m\in \N$ such that $|f^{-1}(\{y\})|\leq m$ for all $y\in Y$.

Suppose towards a contradiction that $f\colon X\to Y$ is not close to any injective  coarse map. Then, by Theorem \ref{ThmThm76Whyte1} and Remark \ref{rem:graphsgivet1}, for all $Z\subseteq Y$ and all $C>0$, we can pick $n\geq n_0$ and $A\subseteq Y_n$ such that 
\begin{equation}\label{EqThmBijCoarExp1}
|\ri(A)|< \frac{1}{C}||A\cap Z|-|f^{-1}(A)||.
\end{equation}
As $|f^{-1}(Y_n)|\leq |Y_{n}|$ and $f(f^{-1}(Y_n))\subseteq Y_{n}$ for all $n\geq n_0$,   we can pick $Z_n\subseteq Y_n$ such that $|f^{-1}(Y_n)|=|Z_n|$ and $f(f^{-1}(Y_n))\subseteq Z_n$ for any such $n$. Let $Z=\bigsqcup_{n\geq n_0} Z_n$ and  $C>0$ be such that $hC>k(m-1)$. Let now   $n\geq n_0$ and $A\subseteq Y_n$ be as in \eqref{EqThmBijCoarExp1}  for $Z$ and $C$. As $|f^{-1}(Y_n)|=|Z_{n}|$ and $f(f^{-1}(Y_n))\subseteq Z_{n}$, Lemma \ref{LemmaEasy} implies that 
\begin{align*}||A\cap Z|-|f^{-1}(A)||& \leq (m-1)\min\{|A\cap Z|,|Z_n\setminus A|\}\\
& \leq (m-1)\min\{|A|,|Y_n\setminus A|\}.
\end{align*}
Therefore, this together with   \eqref{EqThmBijCoarExp1} give
\begin{equation}\label{EqUsingLemmaEasy}
|\ri( A)|\leq \frac{m-1}{C}\min\{|A|,|Y_{n}\setminus A|\}.
\end{equation}

We can now finish the proof. Suppose $|A|\leq |Y_{n}|/2$. In this case, our choice of $h$ guarantees that \[|\ri( A)|\geq h|A|.\] Together with \eqref{EqUsingLemmaEasy}, this implies that $hC\leq m-1$, which contradicts our choice of $C$. We must then have that $|A|>|Y_{n}|/2$. But then $|Y_{n}\setminus A|\leq |Y_n|/2$ and our choice of $h$ implies that 
\[|\partial (Y_{n}\setminus A)|\geq h|Y_n\setminus A|.\] 
As each vertex in $Y_{n}$ has degree at most $k$ we have that $|\partial (Y_{n}\setminus A)|\leq k|\partial(A)|$ and so 
\[
|\partial (A)|\geq \frac{h}{k}|Y_{n}\setminus A|.
\]
Together with \eqref{EqUsingLemmaEasy}, this gives us that $hC\leq k(m-1)$, which contradicts our choice of $C$ once again. 
 \end{proof}

We isolate as a lemma an argument that has already been used many times in the literature (e.g., \cite[page 103]{Whyte1999Duke}, \cite[Corollary 6.10]{WhiteWillett2017}, \cite[IV.B-46]{Harpe2000book}).

\begin{lemma}
\label{LemmaSchroederBersteinKonig}
Let $X$ and $Y$ be u.l.f.\ metric spaces and $f\colon X\to Y$ be a coarse map. If there are injective coarse maps $g\colon X\to Y$ and $h\colon Y\to X$ which are close to $f$ and its coarse inverses, respectively, then $X$ and $Y$ are bijectively coarsely equivalent. 
\end{lemma}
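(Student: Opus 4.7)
The plan is to apply König's constructive proof of the Schröder--Bernstein theorem to the injections $g\colon X\to Y$ and $h\colon Y\to X$, and then check that the resulting bijection inherits the coarseness properties from $f$ via the closeness hypotheses.

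First, I would invoke König's argument to produce a partition $X=X_g\sqcup X_h$ and $Y=Y_g\sqcup Y_h$ such that $g$ restricts to a bijection $X_g\to Y_g$ and $h$ restricts to a bijection $Y_h\to X_h$ (the partition is defined by tracing back the alternating chains of pre-images under $g$ and $h$). Then define $b\colon X\to Y$ by
\[
b(x)=\begin{cases} g(x), & x\in X_g,\\ (h\rs Y_h)^{-1}(x), & x\in X_h.\end{cases}
\]
By construction, $b$ is a bijection with inverse given symmetrically by $b^{-1}(y)=h(y)$ for $y\in Y_h$ and $b^{-1}(y)=(g\rs X_g)^{-1}(y)$ for $y\in Y_g$.

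Next, I would verify that $b$ is close to $f$. Let $f'\colon Y\to X$ be a coarse inverse of $f$ such that $h$ is close to $f'$; fix constants $C_1\coloneqq\sup_{x\in X}d_Y(g(x),f(x))$, $C_2\coloneqq\sup_{y\in Y}d_X(h(y),f'(y))$, and $C_3\coloneqq\sup_{y\in Y}d_Y(f(f'(y)),y)$, all finite by hypothesis. Then for $x\in X_h$, writing $y=b(x)$ so that $x=h(y)$, the triangle inequality together with the coarseness of $f$ applied to $d_X(x,f'(y))\le C_2$ gives a uniform bound on $d_Y(f(x),y)=d_Y(f(x),b(x))$; combined with $d_Y(g(x),f(x))\le C_1$, this yields $\sup_{x\in X_h}d_Y(g(x),b(x))<\infty$. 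On $X_g$ we have $b=g$ exactly. Hence $b$ is close to $g$, and consequently close to $f$. By symmetry (swapping the roles of $f,g$ with $f',h$), the same argument shows $b^{-1}$ is close to $h$, hence to $f'$.

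Finally, since $f$ and $f'$ are coarse and closeness preserves coarseness (a one-line triangle inequality argument using that $X,Y$ are metric), both $b$ and $b^{-1}$ are coarse. Being a bijection with coarse inverse, $b$ is a bijective coarse equivalence between $X$ and $Y$. I do not anticipate a serious obstacle: the only subtlety is bookkeeping the two cases in the definition of $b$ to ensure the closeness bound is uniform over $X$, which is handled by the fact that $h$ is close to the single map $f'$ (so the bound $C_2$ works for every $x\in X_h$ simultaneously).
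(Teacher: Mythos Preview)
Your argument is correct and follows exactly the route the paper indicates: the paper's proof simply cites König's proof of the Schröder--Bernstein theorem (via \cite{Whyte1999Duke}, \cite{WhiteWillett2017}, \cite{Harpe2000book}) and declares the result immediate, whereas you have written out the partition-and-patch construction and verified the closeness bounds explicitly. Your detailed check that $b$ is close to $f$ (and $b^{-1}$ to $f'$) is the content the paper leaves to the references.
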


\begin{proof}
As explained in \cite[Page 103]{Whyte1999Duke}, \cite[Corollary 6.10]{WhiteWillett2017}, or \cite[IV.B-46]{Harpe2000book}, this is an immediate consequence of K\"{o}nig's proof of the Schr\"oder-Bernstein theorem as exposited for example in \cite[Page 88]{HalmosBook}.
\end{proof}

\begin{proof}[Proof of Theorem \ref{ThmBijCorExp}]
 \eqref{ThmBijCorExpItem1}$\Rightarrow$\eqref{ThmBijCorExpItem2}: This implication follows similarly to the proof of Lemma \ref{LemmaInjCoarseClose}. Precisely, let $g\colon X\to Y$ be a bijective coarse equivalence which is close to $f$. As each $X_n$ is $1$-connected, and as $d_Y(Y_n,Y_m)\to \infty$ as $n+m\to \infty $ with $n\neq m$, there is $n_0\in \N$ such that for all $n\geq n_0$ there is $i(n)\in \N$ such that $g(X_n)\subseteq Y_{i(n)}$. Since $f$ is close to $g$, replacing $n_0$ by a larger number if necessary, we can also assume that $f(X_n)\subseteq Y_{i(n)}$ for all $n\geq n_0$.

As $g$ is surjective and as each $X_n$ is finite, the set $\{i(n)\in\N\mid n\geq n_0\}$ must be cofinite in $\N$. Therefore, applying the argument above for $g^{-1}$ and replacing $n_0$ by a larger number if necessary, the fact that $g$ is a bijection implies that $g(X_n)=Y_{i(n)}$ for all $n\in\N$. In particular, we conclude that $i$ defines a bijection between
\[
N=\{n\in\N\mid n\geq n_0\}\ \text{ and }\ M=\{i(n)\in\N\mid n\geq n_0\}
\]
and that 
\[
|X_n|=|Y_{i(n)}| \ \text{ for all }\ n\in N.
\]
Since $g$ restricts to a bijection between $X'=\bigsqcup_{n\in N}X_n$ and $Y'=\bigsqcup_{n\in M}Y_n$, $g$ must also restrict to a bijection between $X\setminus X'$ and $Y\setminus Y'$. Therefore,
\[
\sum_{n\in \N\setminus N}|X_n|=\sum_{n\in \N\setminus M}|Y_n|.
\]

\eqref{ThmBijCorExpItem2}$\Rightarrow$\eqref{ThmBijCorExpItem1}: There are two approaches to proving this implication. One of them is to notice that the proof follows completely analogously to the proofs of Theorems \ref{ThmThm76Whyte1} and \ref{ThmINJCorExp} with the only difference that instead of using Theorem \ref{Thm11Whyte}\eqref{Thm11WhyteItem1} in it, we replace it by Theorem \ref{Thm11Whyte}\eqref{Thm11WhyteItem2}. Alternatively, Theorem \ref{ThmINJCorExp} gives us injective coarse maps $g\colon X\to Y$ and $h\colon Y\to X$ which are close to $f$ and to any coarse inverse of it, respectively. The result then follows from Lemma~\ref{LemmaSchroederBersteinKonig}.
\end{proof}

\subsection{Application to $k$-stacking of expanders}

This section contains an application of the results in our previous section. As a special case, we obtain a sort of ``rigidity'' result for bipartite expander graphs (see Example \ref{bip ex} and Corollary \ref{CorStacking}).\footnote{Recall, a graph $X$ is \emph{bipartite} if there is a partition $X=V_1\sqcup V_2$ such that every edge of $X$ has one of its endpoints in $V_1$ and the other in $V_2$.} 

For that, we introduce the following definition: 

\begin{definition}\label{DefiStacking}
 Let $(X,d_X)$ be a metric space and $k\in\N$. We say that a metric space $(\bar X,d_{\bar X})$ is a \emph{$k$-stacking of $X$} if 
 \begin{enumerate}
 \item $\bar X=X\times \{1,\ldots, k\}$,
 \item the map $x\in X \mapsto (x,1)\in {\bar X}$ is a coarse equivalence, and 
 \item $\sup_{x\in X}\max_{i,j\leq k}d_{\bar X}((x,i),(x,j))<\infty$.
 \end{enumerate} 
\end{definition}

\begin{example}\label{bip ex}
 Given a metric space $(X,d_X)$, we can construct a $2$-stacking of $X$ by letting $d_{\bar X}$ be the metric in $\bar X=X\times \{1,2\}$ given by 
 \[d_{\bar X}((x,i),(z,j))=\left\{\begin{array}{ll}
 d_X(x,z), & \text{ if } i=j, \\
 d_X(x,z)+1, & \text{ if } i\neq j.
 \end{array}\right.\]
\end{example}

The motivation for Definition \ref{DefiStacking} comes from bipartite expander graphs: 

\begin{definition}
Let $k\in\N$ and $X$ be a graph such that each vertex is incident to at most $k$ edges. Suppose moreover that $X$ is bipartite graph with bipartition $X=V_1\sqcup V_2$ satisfying $|V_1|=|V_2|$. Given $h>0$, we say that $X$ is a \emph{$(k,h)$-bipartite expander} if for all $A\subseteq V_1$ with $|A|\leq |V_1|/2$ we have 
\[
|\partial A|\geq (1+h)|A|.
\]
\end{definition}

Expander graphs and their bipartite analogs are well-known to be in correspondence (see 
\cite[Remark 1.1.2.(ii)]{LubotzkyBook2010}). We now recall how a $(k,h)$-expander canonically generates a bipartitite expander, which is in particular a $2$-stacking of it. 

\begin{example}
Let $k\in\N$, $h>0$, and $X$ be a $(k,h)$-expander graph. The \emph{bipartite expander of $X$} is the $2$-stacking of $X$ defined as follows: let $\bar X=X\times \{1,2\}$ and define a graph structure on $\bar X$ by connecting each of the vertices $(x,1)$ to $(x,2)$ and to all $(z,2)$ such that $(x,z)$ is an edge of $X$. This makes $\bar X$ into a bipartite graph with bipartition $(X\times \{1\})\sqcup (X\times \{2\})$. Moreover, it is immediate to check that $\bar X$ is a $(k+1,h)$-bipartite expander. 
\end{example}

\begin{corollary}\label{CorStacking}
 Let $X$ and $Y$ be coarse disjoint union of expanders. Given $k\in\N$, let $\bar X$ and $\bar Z$ be $k$-stackings of $X$ and $Y$, respectively. The following are equivalent.
 \begin{enumerate}
 \item\label{CorStackingItem1} $X$ and $Y$ are bijectively coarsely equivalent.
 \item \label{CorStackingItem2} $\bar X$ and $\bar Y$ are bijectively coarsely equivalent.
 \end{enumerate}
\end{corollary}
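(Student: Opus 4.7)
The plan is to apply Theorem~\ref{ThmBijCorExp} twice, once to the stacked spaces $\bar X,\bar Y$ to extract combinatorial data, and once to the unstacked spaces $X,Y$ to build the bijective coarse equivalence.

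The direction \eqref{CorStackingItem1}$\Rightarrow$\eqref{CorStackingItem2} is the easy one: given a bijective coarse equivalence $\phi\colon X\to Y$, I would define $\bar\phi(x,i)\coloneqq (\phi(x),i)$ and check that this is a bijective coarse equivalence. Bijectivity is clear, and coarseness of $\bar\phi$ and of its inverse follows by combining coarseness of $\phi$, the fact that $x\mapsto(x,1)$ is a coarse equivalence $X\to\bar X$ (and likewise $Y\to\bar Y$) by clause~(2) of Definition~\ref{DefiStacking}, and the uniform layer bound $D\coloneqq \sup_{x,i,j}d_{\bar X}((x,i),(x,j))$ coming from clause~(3). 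No expander hypothesis is needed.

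For \eqref{CorStackingItem2}$\Rightarrow$\eqref{CorStackingItem1}, fix a bijective coarse equivalence $\bar\Psi\colon\bar X\to\bar Y$. Write $X=\bigsqcup_n X_n$ and $Y=\bigsqcup_n Y_n$, and set $\bar X_n\coloneqq X_n\times\{1,\dots,k\}$, $\bar Y_n\coloneqq Y_n\times\{1,\dots,k\}$. The first step I would carry out is to verify that $\bar X=\bigsqcup_n\bar X_n$ and $\bar Y=\bigsqcup_n\bar Y_n$ are coarse disjoint unions of finite, uniformly $t_0$-connected metric spaces: the $t_0$-connectedness of $\bar X_n$ follows by chaining the $1$-connectedness of the graph $X_n$ (translated into $\bar X$ via coarseness of $x\mapsto(x,1)$) with the layer bound $D$, while $d_{\bar X}(\bar X_n,\bar X_m)\to\infty$ for $n\neq m$ follows from the analogous property of $X$ together with the fact that the inclusion $X\hookrightarrow\bar X$ has a coarse inverse. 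Next, I would observe that the proof of \eqref{ThmBijCorExpItem1}$\Rightarrow$\eqref{ThmBijCorExpItem2} of Theorem~\ref{ThmBijCorExp} only uses $t_0$-connectedness of the pieces (with $t_0$ replacing $1$) and never invokes the expander hypothesis; applying that argument to $\bar\Psi$ produces cofinite $N,M\subseteq\N$ and a bijection $i\colon N\to M$ with $\bar\Psi(\bar X_n)=\bar Y_{i(n)}$, $|\bar X_n|=|\bar Y_{i(n)}|$ for all $n\in N$, and $\sum_{n\notin N}|\bar X_n|=\sum_{n\notin M}|\bar Y_n|$. Dividing each cardinality by $k$ transfers the corresponding identities to $X_n$ and $Y_n$.

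To finish, I would let $\rho_Y\colon\bar Y\to Y$ be a coarse inverse of $y\mapsto(y,1)$ and define $\psi\colon X\to Y$ by $\psi(x)\coloneqq \rho_Y(\bar\Psi(x,1))$; this is a coarse equivalence as a composition of coarse equivalences. Because $\rho_Y$ is close to the first-coordinate projection and $\bar\Psi(\bar X_n)=\bar Y_{i(n)}$ for $n\in N$, after enlarging the threshold in $N$ if necessary we have $\psi(X_n)\subseteq Y_{i(n)}$. Thus $\psi$ satisfies clause~\eqref{ThmBijCorExpItem2} of Theorem~\ref{ThmBijCorExp}, and the \eqref{ThmBijCorExpItem2}$\Rightarrow$\eqref{ThmBijCorExpItem1} direction of that theorem (which crucially uses the expander hypothesis on $Y$) yields a bijective coarse equivalence $X\to Y$ close to $\psi$. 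The main obstacle I anticipate is Step~1 of the converse direction, namely the careful but essentially routine verification that the stacked pieces $\bar X_n$ are uniformly $t_0$-connected and that $\bar X=\bigsqcup_n\bar X_n$ is a genuine coarse disjoint union; once this geometric setup is in place, the rest reduces to bookkeeping and to two invocations of Theorem~\ref{ThmBijCorExp}.
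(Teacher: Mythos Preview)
Your proposal is correct and follows essentially the same route as the paper: extract the combinatorial data $(N,M,i)$ from the bijective coarse equivalence $\bar\Psi$ on the stacked spaces via the \eqref{ThmBijCorExpItem1}$\Rightarrow$\eqref{ThmBijCorExpItem2} direction of Theorem~\ref{ThmBijCorExp}, divide cardinalities by $k$, push $\bar\Psi$ down to a coarse equivalence $X\to Y$ by composing with the inclusion $X\hookrightarrow\bar X$ and a map $\bar Y\to Y$, and then invoke \eqref{ThmBijCorExpItem2}$\Rightarrow$\eqref{ThmBijCorExpItem1}. The only cosmetic difference is that the paper uses the canonical first-coordinate projection $\pi\colon\bar Y\to Y$ rather than a coarse inverse $\rho_Y$ of the inclusion, which makes $\psi(X_n)\subseteq Y_{i(n)}$ automatic and avoids your ``enlarging the threshold'' step; your added care in checking that the $\bar X_n$ are uniformly $t_0$-connected and that \eqref{ThmBijCorExpItem1}$\Rightarrow$\eqref{ThmBijCorExpItem2} needs only connectedness (not the expander hypothesis) is a point the paper leaves implicit.
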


\begin{proof}
\eqref{CorStackingItem1}$\Rightarrow$\eqref{CorStackingItem2}: This implication is immediate. 

\eqref{CorStackingItem2}$\Rightarrow$\eqref{CorStackingItem1}: Let $f\colon \bar X\to \bar Y$ be a bijective coarse equivalence. For each $n\in\N$, let $\bar X_n=X_n\times \{1,\ldots, k\}$ and $\bar Y_n=Y_n\times \{1,\ldots, k\}$. So $\bar X_n$ and $\bar Y_n$ can be seen canonically as subspaces of $\bar X$ and $\bar Y$, respectively, and $\bar X=\bigsqcup_n\bar X_n$ and $\bar Y=\bigsqcup_n\bar Y_n$ are coarse disjoint unions. The implication \eqref{ThmBijCorExpItem1}$\Rightarrow$\eqref{ThmBijCorExpItem2} of Theorem~\ref{ThmBijCorExp} gives cofinite subsets $N,M\subseteq \N$ and a bijection $i\colon N\to M$ such that 
\[
\sum_{n\in \N\setminus N}|\bar X_n|=\sum_{n\in \N\setminus M}|\bar Y_n| \quad \text{and} \quad f\Bigg(\bigsqcup_{n\in \N\setminus N} \bar X_n\Bigg)\subseteq \bigsqcup_{n\in \N\setminus M} \bar Y_n 
\]
 and such that 
 \[
 \ |\bar X_n|=|\bar Y_{i(n)}|\ \text{ and }\ f(\bar X_n)\subseteq \bar Y_{i(n)}
\ \text{ for all }\ n\in N.
\]
Consequently, we must have 
\[
\sum_{n\in \N\setminus N}| X_n|=\sum_{n\in \N\setminus M}| Y_n|\ \text{ and }\ | X_n|=| Y_{i(n)}|\ \text{ for all }\ n\in N.
 \]
Let now $j\colon X\to X\times \{1\}\subseteq \bar X$ be the canonical inclusion and $\pi\colon \bar Y\to Y$ be the canonical projection. Then, letting $g=\pi\circ f\circ j$, we have that $g$ is a coarse equivalence between $X$ and $Y$ such that 
\[
g(X_n)\subseteq Y_{i(n)}\ \text{ for all }\ n\in A.
\]
The result then follows from the implication \eqref{ThmBijCorExpItem2}$\Rightarrow$\eqref{ThmBijCorExpItem1} of Theorem
\ref{ThmBijCorExp}.
\end{proof}
 
\subsection{Coarse bijective equivalences and non-amenability of metric spaces} 
In this section, we provide an elementary proof of Theorem \ref{WhyteIntro}.  After (re)discovering this simple argument, we learned that a similar elementary proof of Theorem \ref{WhyteIntro} is due to V. Nekrashevych, from his 1998 PhD thesis written in Ukrainian, and can be found in \cite[IV.B-46]{Harpe2000book}.  We nonetheless present our proof here, partly as Proposition \ref{PropCoarseEmbImpliesInjCNonAme} is not explicitly in the literature, and seems likely to be useful in other contexts.

One of the many equivalent characterizations of non-amenability is the following (cf. \cite[Lemma 2.1]{Whyte1999Duke}) 

\begin{definition}
Let $(X,d_X)$ be a u.l.f.\ metric space. For each $r>0$ and $S\subseteq X$, the \emph{$r$-neighborhood of $S$} is given by 
\[
N_r(S)\coloneqq \{x\in X\mid d(x,S)\le r\}.
\]
We say that $X$ is \emph{non-amenable} if for all $C>1$ there is $r>0$ such that \[|N_r(S)|\ge C |S| \ \text{ for all } S\subseteq X.\]
\end{definition}

We now recall Hall's marriage theorem. Given a set $Y$, $Y^{<\infty}$ denotes the set of all finite subsets of $Y$.

\begin{theorem}[Hall]
Let $X,Y$ be arbitrary sets and $\Phi \colon X \to Y^{<\infty}$. There is an injective $\varphi\colon X\to Y$ such that $\varphi(x)\in \Phi(x)$ for all $x\in X$ if and only if for all finite subset $S\subseteq X$ we have 
\[
|S|\le \Bigg|\bigcup_{x\in S}\Phi(x)\Bigg|.
\]
 \label{ThmHall}
\end{theorem}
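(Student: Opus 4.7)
Necessity is immediate: if an injective $\varphi\colon X\to Y$ with $\varphi(x)\in\Phi(x)$ exists, then for any finite $S\subseteq X$ one has $\varphi(S)\subseteq\bigcup_{x\in S}\Phi(x)$ and $|\varphi(S)|=|S|$, giving $|S|\le|\bigcup_{x\in S}\Phi(x)|$.

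For sufficiency, my plan is to first establish the finite case (when $X$ itself is finite) and then deduce the general case by compactness. I would prove the finite case by strong induction on $|X|$. The base $|X|=1$ is trivial since Hall's condition applied to $S=\{x\}$ forces $\Phi(x)\neq\emptyset$. For the inductive step, separate into two cases. If every proper nonempty $S\subsetneq X$ satisfies the strict inequality $|S|<|\bigcup_{x\in S}\Phi(x)|$, fix any $x_0\in X$ and any $y_0\in\Phi(x_0)$ and apply the inductive hypothesis to $X\setminus\{x_0\}$ with the assignment $x\mapsto\Phi(x)\setminus\{y_0\}$; the strict slack ensures Hall's condition persists after deleting one element of $Y$. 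If instead some proper nonempty $S_0\subsetneq X$ is tight, i.e.\ $|S_0|=|T_0|$ for $T_0:=\bigcup_{x\in S_0}\Phi(x)$, apply the inductive hypothesis to $(S_0,\Phi\rs S_0)$ to match $S_0$ bijectively onto $T_0$, and then apply the inductive hypothesis to $X\setminus S_0$ with $x\mapsto\Phi(x)\setminus T_0$. A brief counting argument uses the identity $\bigcup_{x\in S'\cup S_0}\Phi(x)=T_0\sqcup\bigcup_{x\in S'}(\Phi(x)\setminus T_0)$ together with Hall's condition for $S'\cup S_0$ to verify Hall's condition for the residual problem. Gluing the two partial injections produces an injection on all of $X$.

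For arbitrary (possibly uncountable) $X$, a standard Tychonoff argument finishes the job. Equip each finite set $\Phi(x)$ with the discrete topology and let $K=\prod_{x\in X}\Phi(x)$ with the product topology; by Tychonoff's theorem, $K$ is compact. For each finite $F\subseteq X$ set
\[
K_F=\{\varphi\in K\mid \varphi\rs F\text{ is injective}\}.
\]
Each $K_F$ is closed, being cut out by equality conditions on finitely many coordinates, and it is nonempty by the finite case applied to $(F,\Phi\rs F)$, whose Hall condition is inherited from that of $(X,\Phi)$. Since $K_{F_1\cup F_2}\subseteq K_{F_1}\cap K_{F_2}$, the family $\{K_F\}_F$ has the finite intersection property, so compactness produces a $\varphi\in\bigcap_F K_F$. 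Such a $\varphi$ is globally injective and satisfies $\varphi(x)\in\Phi(x)$ for every $x\in X$.

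The only genuinely combinatorial step is the finite case, and even there the choice of whether some proper tight subset exists is the only real branching; the compactness step is painless once the finite version is in hand.
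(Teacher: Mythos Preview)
Your proof is correct and follows the standard textbook route: Halmos-style strong induction for the finite case, then Tychonoff compactness to pass to arbitrary $X$. The paper, however, does not supply its own proof of this statement; Hall's theorem is simply quoted as a classical result and used as a black box in the proof of Proposition~\ref{PropCoarseEmbImpliesInjCNonAme}. So there is nothing to compare against beyond noting that what you wrote is a complete and correct argument for the version stated (with each $\Phi(x)$ finite, which is exactly what makes the product $\prod_{x}\Phi(x)$ compact).
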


\begin{proposition}\label{PropCoarseEmbImpliesInjCNonAme}
 Let $X$ and $Y$ be metric spaces, $X$ be u.l.f., and $f\colon X\to Y$ be a uniformly finite-to-one coarse map. If $X$ is non-amenable, then there is an injective coarse map which is close to $f$. 
\end{proposition}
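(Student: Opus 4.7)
The plan is to apply Hall's marriage theorem (Theorem~\ref{ThmHall}) with the set-valued assignment $x \mapsto f(B_X(x,r))$, where $r$ is chosen large enough that the expansion supplied by non-amenability absorbs the multiplicity of $f$.

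First, I would fix $K \in \N$ with $|f^{-1}(\{y\})| \le K$ for every $y \in Y$ (which exists since $f$ is uniformly finite-to-one) and a modulus of coarseness $\omega\colon (0,\infty)\to (0,\infty)$ for $f$, so that $d_X(x,x') \le r$ implies $d_Y(f(x),f(x')) \le \omega(r)$. If $K = 1$ then $f$ itself is injective, so I will assume $K \ge 2$. Using the non-amenability of $X$ with constant $C = K$, I would pick $r > 0$ such that $|N_r(S)| \ge K\,|S|$ for every $S \subseteq X$.

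Next, define $\Phi\colon X \to Y^{<\infty}$ by $\Phi(x) := f(B_X(x,r))$. Each $\Phi(x)$ is finite because $X$ is u.l.f., and it is nonempty since $f(x) \in \Phi(x)$. For any finite $S \subseteq X$,
\[
\Bigl|\bigcup_{x\in S}\Phi(x)\Bigr| \;=\; |f(N_r(S))| \;\ge\; \frac{|N_r(S)|}{K} \;\ge\; |S|,
\]
since each fiber of $f$ has at most $K$ elements. Hall's condition is therefore satisfied, and Theorem~\ref{ThmHall} produces an injective selector $\varphi\colon X \to Y$ with $\varphi(x) \in \Phi(x)$ for every $x \in X$.

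Finally, I would verify that $\varphi$ is close to $f$ (and hence automatically coarse, being close to the coarse map $f$). By construction, for each $x \in X$ there is $x' \in B_X(x,r)$ with $\varphi(x) = f(x')$, and so
\[
d_Y(\varphi(x), f(x)) \;=\; d_Y(f(x'), f(x)) \;\le\; \omega(r),
\]
yielding the desired injective coarse map. The only substantive step is choosing $r$ via non-amenability to secure the inequality $|N_r(S)| \ge K\,|S|$; both Hall's theorem and the closeness check are routine once this expansion is in hand, so I do not expect a real obstacle.
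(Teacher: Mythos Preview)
Your proof is correct and follows essentially the same approach as the paper: both apply Hall's theorem to the assignment $x\mapsto f(B_X(x,r))$, choosing $r$ via non-amenability with $C$ equal to the multiplicity bound on $f$, and then use that fibers of size at most $K$ force $|f(N_r(S))|\ge |N_r(S)|/K$. The only cosmetic difference is that you verify closeness explicitly via a modulus $\omega$, whereas the paper observes directly that any selector for $\Phi_r$ is automatically close to $f$.
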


\begin{proof}
As $f\colon X\to Y$ is uniformly finite-to-one, there is $m\in\N$ with $|f^{-1}(\{y\})|\leq m$ for all $y\in Y$. If $m=1$, then $f$ is already an injection and hence we can assume that $m\ge 2$. For each $r>0$, let $\Phi_r \colon X \to Y^{<\infty}$ be given by 
\[
\Phi_r(x)=f(B_X(x,r))\ \text{ for all }\ x\in X.
\] 
Notice that, as $X$ is u.l.f., each $B_X(x,r)$ is finite, so $f$ is well-defined. Notice that a map $g\colon X\to Y$ is close to $f$ if and only if there is $r>0$ such that $g(x)\in\Phi_r(x)$ for all $x\in X$. By Hall's marriage theorem (Theorem \ref{ThmHall}), this is equivalent to the existence of $r>0$ such that 
\[
|S|\le \Bigg|\bigcup_{x\in S}\Phi_r(x)\Bigg|
\]
for all finite $S\subseteq X$. Since, 
\[
\Bigg|\bigcup_{x\in S}\Phi_r(x)\Bigg|=\Bigg|\bigcup_{x\in S}f(B_X(x,r))\Bigg|=|f(N_r(S))|,
\]
it is enough notice that there is $r>0$ such that 
\begin{align*} 
|S|&\le |f(N_r(S))|
\end{align*}
for all finite $S\subseteq X$. By the non-amenability of $X$ for $C=m>1$, there is $r>0$ such that for every finite subset $S\subseteq X$, we have $m |S|\le |N_{r}(S)|$. Observing that $N_r(S)\subseteq f^{-1}(f(N_r(S)))$, our choice of $m$ gives that 
\[
|N_r(S)|\le m|f(N_r(S))|.
\]
It then follows that $|S|\le |f(N_{r}(S))|$ for all finite $S\subseteq X$ as desired.
\end{proof}

 \begin{corollary}\label{ThmBijCoarseNonAme}
 Let $X$ and $Y$ be u.l.f.\ metric spaces which are coarsely equivalent. If $X$ is non-amenable, then $X$ and $Y$ are bijectively coarsely equivalent. 
\end{corollary}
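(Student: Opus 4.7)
The plan is to combine Proposition \ref{PropCoarseEmbImpliesInjCNonAme} (applied in both directions) with the Schr\"oder--Bernstein--K\"onig lemma (Lemma \ref{LemmaSchroederBersteinKonig}). Let $f\colon X\to Y$ be a coarse equivalence and let $g\colon Y\to X$ be a coarse inverse for $f$. The first preliminary observation is that, since $X$ and $Y$ are u.l.f.\ and $f,g$ are coarse with $g\circ f$ and $f\circ g$ close to the identities, both $f$ and $g$ are automatically uniformly finite-to-one: indeed, if $c>0$ bounds $\sup_{x\in X}d_X(g(f(x)),x)$, then $f^{-1}(\{y\})$ is contained in the ball of radius $c$ around any preimage of $y$ under $g$, and u.l.f.\ bounds the size of such a ball uniformly in $y$; symmetrically for $g$.

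The second preliminary observation is that non-amenability is a coarse invariant, so $Y$ is non-amenable as well. This is standard, but let me indicate the short verification: given $C>1$, apply non-amenability of $X$ to the constant $mC$ (where $m$ uniformly bounds the fibers of $g$) to get some $r>0$ with $|N_r(S)|\ge mC|S|$ for every finite $S\subseteq X$; then for a finite subset $T\subseteq Y$, applying this to $S=g(T)$ and using that $f$ is coarse (so $N_{r}(g(T))$ is mapped by $f$ into a bounded neighborhood $N_{r'}(T)$, up to being close to the identity) yields a neighborhood expansion of $T$ by factor $C$ in $Y$.

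Now I apply Proposition \ref{PropCoarseEmbImpliesInjCNonAme} to the uniformly finite-to-one coarse map $f\colon X\to Y$, using that $X$ is non-amenable, to produce an injective coarse map $\tilde f\colon X\to Y$ close to $f$. Symmetrically, I apply Proposition \ref{PropCoarseEmbImpliesInjCNonAme} to $g\colon Y\to X$, using that $Y$ is non-amenable, to produce an injective coarse map $\tilde g\colon Y\to X$ close to $g$. Since closeness preserves the property of being a coarse inverse, $\tilde f$ and $\tilde g$ are coarse inverses of each other (up to closeness).

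Finally, Lemma \ref{LemmaSchroederBersteinKonig} applied with the coarse equivalence $f$ and the injective coarse maps $\tilde f$ and $\tilde g$ (close to $f$ and its coarse inverse respectively) delivers a bijective coarse equivalence between $X$ and $Y$. The only step requiring any care is the coarse invariance of non-amenability; once that is in hand the rest is an immediate concatenation of the proposition and the lemma, both already proved.
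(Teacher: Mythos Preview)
Your proof is correct and follows essentially the same approach as the paper: apply Proposition \ref{PropCoarseEmbImpliesInjCNonAme} to both $f$ and a coarse inverse, then invoke Lemma \ref{LemmaSchroederBersteinKonig}. You make explicit two points the paper leaves implicit---that $f$ and $g$ are uniformly finite-to-one, and that non-amenability passes from $X$ to $Y$ (the paper notes this coarse invariance only in a footnote elsewhere)---which is a reasonable level of care; your constant in the invariance sketch is slightly off (one also needs to divide by a bound on the fibers of $f$), but the argument is standard and easily repaired.
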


\begin{proof}
Let $f\colon X\to Y$ be a coarse equivalence. Applying Proposition \ref{PropCoarseEmbImpliesInjCNonAme} to $f$ and a coarse inverse of it, we get injective maps $f\colon X \to Y$ and $h\colon Y \to X$ which are close to $f$ and its coarse inverse, respectively. The result then follows from Lemma \ref{LemmaSchroederBersteinKonig}.
\end{proof}

\section{Isomorphisms between uniform Roe algebras of coarse disjoint unions}\label{sec:Roe}

In this section, we prove the following general result about the structure of isomorphisms between the uniform Roe algebras of coarse disjoint unions of finite metric spaces.
\begin{theorem}\label{T.|Xn|=|Yn|}
Let $t>0$. Let $X=\bigsqcup_nX_n$ and $Y=\bigsqcup_nY_n$ be u.l.f.\ metric spaces which are the coarse disjoint union of finite $t$-connected metric spaces. If $\cstu(X)$ and $\cstu(Y)$ are isomorphic, then there are cofinite subsets $N,M\subseteq\N$, a bijection $i\colon N\to M$, and a coarse equivalence $f\colon X\to Y$ such that 
\[\sum_{n\in \N\setminus N}|X_n|=\sum_{n\in \N\setminus M}|Y_n|\]
and 
\[|X_n|=|Y_{i(n)}|\ \text{ and }\ f(X_n)\subseteq Y_{i(n)}\ \text{ for all }\ n\in N.\]
\end{theorem}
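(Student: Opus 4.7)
The plan leverages three previously-established tools: the rigidity theorem \cite[Theorem 1.2]{BaudierBragaFarahKhukhroVignatiWillett2021uRaRig} producing a coarse equivalence from an isomorphism of uniform Roe algebras; the identification of $Z(\roeq(X))$ with the algebra of continuous functions on the Higson corona $\partial_h X$ from \cite[Proposition 3.6]{BaudierBragaFarahVignatiWillett2023}; and the corona analysis of \cite{BragaFarahVignati2018AdvMath}. First I would apply the rigidity theorem to extract a coarse equivalence $f\colon X\to Y$ from the given isomorphism $\Phi\colon\cstu(X)\to\cstu(Y)$. The $t$-connectedness of each $X_n$, combined with $d_Y(Y_m,Y_{m'})\to\infty$ as $\max\{m,m'\}\to\infty$ and the coarseness of $f$, forces $f(X_n)$ to be contained in a single block $Y_{i(n)}$ for all $n$ in some cofinite $N\subseteq\N$; applying the same argument to a coarse inverse of $f$ produces a cofinite $M\subseteq\N$ and shows that $i\colon N\to M$ is a bijection. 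After adjusting $f$ on the finitely many blocks outside $N$, this delivers $f(X_n)\subseteq Y_{i(n)}$ for every $n\in N$.

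To obtain the cardinality conditions, I would use that every $*$-isomorphism $\Phi$ of uniform Roe algebras preserves the ideal $\mathcal{K}$ of compact operators (standard, since $\mathcal{K}$ is characterized abstractly as the minimal essential ideal of $\cstu$). Hence $\Phi$ descends to $\bar\Phi\colon\roeq(X)\to\roeq(Y)$ and restricts to a $*$-isomorphism $\Phi|_{\mathcal{K}}\colon\mathcal{K}(\ell_2(X))\to\mathcal{K}(\ell_2(Y))$, which preserves ranks of finite-rank projections; in particular $\mathrm{rank}(\Phi(1_{X_n}))=|X_n|$ for every $n$. For each $S\subseteq\N$ the block-union projection $p_S=1_{\bigsqcup_{n\in S}X_n}$ has central class $[p_S]\in Z(\roeq(X))$, corresponding via the Higson-corona identification to a clopen subset of $\partial_h X$. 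Using the Roe-corona machinery of \cite{BragaFarahVignati2018AdvMath} together with the block matching coming from $f$, the key claim to establish is that $\bar\Phi[p_S]=[q_{\tilde{i}(S)}]$ for $q_T=1_{\bigsqcup_{m\in T}Y_m}$, where $\tilde{i}$ is the natural extension of $i$ to $\mathcal{P}(\N)/\mathrm{fin}$; equivalently, $\Phi(p_S)-q_{\tilde{i}(S)}\in\mathcal{K}$.

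With the block-level matching at the central-projection level in hand, the cardinality equalities follow by telescoping. For $n\in N$ and $S\subseteq N$ cofinite with $n\in S$, $1_{X_n}=p_S-p_{S\setminus\{n\}}$, so $\Phi(1_{X_n})=1_{Y_{i(n)}}+K_n$ for a compact self-adjoint $K_n$; since $\Phi(1_{X_n})$ and $1_{Y_{i(n)}}$ are both finite-rank projections, $\mathrm{tr}(K_n)=|X_n|-|Y_{i(n)}|$, and $|X_n|=|Y_{i(n)}|$ reduces to the vanishing $\mathrm{tr}(K_n)=0$, arranged by choosing canonical lifts of the central projections (so that the compact error has zero trace) or equivalently by producing a Murray--von Neumann equivalence $\Phi(1_{X_n})\sim 1_{Y_{i(n)}}$ in $\cstu(Y)$. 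The equality $\sum_{n\notin N}|X_n|=\sum_{n\notin M}|Y_n|$ is obtained by the same rank-preservation argument applied to the finite-rank complementary projections $1_{X\setminus\bigsqcup_{n\in N}X_n}$ and $1_{Y\setminus\bigsqcup_{n\in M}Y_n}$. The main obstacle lies in the second paragraph: intrinsically identifying the block-union central projections within $\roeq(X)$ and showing that $\bar\Phi$ matches them via $\tilde{i}$, since $\bar\Phi$ a priori preserves only centrality and not the specific block Boolean subalgebra of $Z(\roeq)$; controlling the compact perturbations well enough to extract the trace vanishings in the third paragraph is the technical heart of the argument.
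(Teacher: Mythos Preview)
Your overall framework---use rigidity to produce $f$, identify central projections in $\roeq$ with block unions, and exploit that $\Phi$ is spatially implemented hence rank-preserving---matches the paper's. The paper, however, reverses your order: it first defines $i(n)$ by the operator-algebraic condition $\|\Phi(\chi_{X_n})-\chi_{Y_{i(n)}}\|<\varepsilon$ and only afterwards checks $f(X_n)\subseteq Y_{i(n)}$ via the estimate $\inf_x\|\Phi(\chi_{\{x\}})\delta_{f(x)}\|>0$ coming from the rigidity theorem. This ordering is not cosmetic; it is what makes the cardinality argument go through.

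The genuine gap is in your third paragraph. Knowing only that $\Phi(1_{X_n})-1_{Y_{i(n)}}$ is compact tells you nothing about ranks: both projections are finite-rank, so their difference is trivially compact regardless of whether the ranks agree. Your proposed fixes do not work. There is no freedom to ``choose canonical lifts'' so that the compact error has zero trace: the projections $1_{X_n}$ and $1_{Y_{i(n)}}$ are already specific operators, and $\mathrm{tr}(K_n)=|X_n|-|Y_{i(n)}|$ is exactly what you are trying to prove is zero, not something you can arrange. Nor have you indicated any mechanism producing a Murray--von Neumann equivalence $\Phi(1_{X_n})\sim 1_{Y_{i(n)}}$ in $\cstu(Y)$.

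What the paper does instead is prove a technical lemma (roughly: if $(p_n)$ and $(q_n)$ are sequences of mutually orthogonal finite-rank projections with $\sum q_n=1$, and if for every $A$ there is $B$ with $p_A-q_B$ compact, then for all but finitely many $n$ there is a finite $F$ with $\|p_n-q_F\|<\varepsilon$). This upgrades the compact-perturbation information you have to genuine norm-closeness $\|\Phi(\chi_{X_n})-\chi_{Y_{i(n)}}\|<\varepsilon<1$ for cofinitely many $n$, from which equal rank is immediate. You correctly flagged ``controlling the compact perturbations'' as the technical heart, but your proposal does not supply the mechanism that accomplishes it.
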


Together with the results of the previous section, this implies that the coarse disjoint union of expander graphs are strongly rigid (Theorem \ref{ThmIsoURAImpliesBijCoarseEquivExp}).

Towards proving Theorem~\ref{T.|Xn|=|Yn|}, we consider the uniform Roe corona of a u.l.f.\ metric space. Precisely: given a u.l.f.\ metric space $X$, its \emph{uniform Roe corona} is the quotient
\[
\roeq(X)=\cstu(X)/\cK(\ell_2(X)).
\]
Throughout this section, 
\[
\pi\colon \cstu(X)\to \roeq(X)
\] 
denotes the canonical quotient map. 

Suppose $X=\bigsqcup_n X_n$ is the coarse disjoint union of metric spaces. We now introduce some notation which will be used for the remainder of this section. Given $A\subseteq \bbN$, we write 
\[
X_A=\bigsqcup_{n\in A} X_n
\]
and similarly for $Y$.

\begin{theorem}\label{T.Q(X).CentralProjection}
Let $t>0$ and $X=\bigsqcup_nX_n$ be the coarse disjoint union of finite $t$-connected metric spaces. A projection $p\in \roeq(X)$ is central if and only if it is of the form $\pi( \chi_{X_A})$ for some $A\subseteq \bbN$. 
\end{theorem}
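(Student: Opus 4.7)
The plan is to prove the two implications separately; the easy direction is a direct finite-rank commutator computation, while the hard direction passes through the identification of the center of $\roeq(X)$ with continuous functions on the Higson corona. For $(\Leftarrow)$, given $A\subseteq\bbN$, it suffices by norm-density of finite-propagation operators to show that $[\chi_{X_A},a]\in\cK(\ell_2(X))$ for every $a\in\cstu(X)$ of propagation at most some $r>0$. The matrix entry of $[\chi_{X_A},a]$ at $(x,y)$ equals $\langle a\delta_y,\delta_x\rangle(\chi_{X_A}(x)-\chi_{X_A}(y))$, which is nonzero only when $d(x,y)\le r$ and exactly one of $x,y$ lies in $X_A$; the latter condition forces $x\in X_n$ and $y\in X_m$ with $n\neq m$, because $\chi_{X_A}$ is constant on each component $X_n$. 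Since $d(X_n,X_m)\to\infty$ as $(n,m)\to\infty$ through distinct pairs, only finitely many $(n,m)$ can contribute, and as each $X_n$ is finite the commutator is finite-rank.

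For $(\Rightarrow)$, I invoke the canonical isomorphism $Z(\roeq(X))\cong C_h(X)/C_0(X)$ from \cite[Proposition 3.6]{BaudierBragaFarahVignatiWillett2023}, where $C_h(X)$ is the Higson algebra and the isomorphism sends $[f]$ to $\pi(M_f)$ with $M_f$ the diagonal multiplication operator. Given a central projection $p$, lift the corresponding projection in $C_h(X)/C_0(X)$ to a self-adjoint, hence real-valued, $f\in C_h(X)$; then $f^2-f\in C_0(X)$. I will produce $A\subseteq\bbN$ with $f-\chi_{X_A}\in C_0(X)$, which yields $p=\pi(M_f)=\pi(\chi_{X_A})$. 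Fix $\eps<1/3$. From $f^2-f\in C_0(X)$ there is a finite set $K_1$ such that $f(X\setminus K_1)\subseteq (-\eps,\eps)\cup(1-\eps,1+\eps)$; from the Higson property applied at scale $t$ there is a finite set $K_2$ such that $x,y\notin K_2$ with $d(x,y)\le t$ implies $|f(x)-f(y)|<\eps$. For all $n$ large enough that $X_n$ is disjoint from $K_1\cup K_2$, the $t$-connectedness of $X_n$ forbids $f|_{X_n}$ from touching both intervals, since any $t$-path between such points would contain a consecutive pair with $|f(u)-f(v)|\ge 1-2\eps>\eps$. So on each such $X_n$ the function $f$ is uniformly $\eps$-close to $0$, or uniformly $\eps$-close to $1$; declare $n\in A$ in the second case. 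As $\eps\to 0$ this choice stabilizes on the cofinite tail, and extending $A$ arbitrarily to the finitely many remaining indices gives $f-\chi_{X_A}\in C_0(X)$, as required.

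The main obstacle is the last step of the hard direction: three distinct pieces of information, namely Higson variation control at scale $t$, $t$-connectedness of each $X_n$, and the ``near $\{0,1\}$'' consequence of $f^2\equiv f$ modulo $C_0$, must be combined to force $f$ to be asymptotically constant on each component. The $t$-connectedness hypothesis is essential and is precisely what prevents $f$ from oscillating between the two target values inside a single $X_n$; without it, the characterization of central projections by $X_A$-indicators would fail.
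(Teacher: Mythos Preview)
Your proof is correct, and for $(\Rightarrow)$ you take a genuinely different route from the paper.  Both arguments begin from the identification $Z(\roeq(X))\cong C(\nu X)$ of \cite[Proposition~3.6]{BaudierBragaFarahVignatiWillett2023}.  The paper then lifts the central projection to an honest projection $\chi_B\in\ell_\infty(X)$ (using real rank zero of $\ell_\infty(X)$) and argues by contradiction: if $B$ fails to agree with some $X_A$ up to a finite set, one can choose, for infinitely many $n$, points $x_n\in X_n\cap B$ and $x_n'\in X_n\setminus B$ at distance $\le t$, and the resulting propagation-$t$ partial isometry $v=\sum e_{x_n,x_n'}$ satisfies $\chi_B v=v$, $v\chi_B=0$, so $\pi(\chi_B)$ is not central.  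You instead lift only to a self-adjoint Higson function $f$ with $f^2-f\in C_0(X)$ and use the Higson variation condition at scale $t$ directly, together with $t$-connectedness, to force $f$ to be asymptotically near $0$ or near $1$ on each $X_n$.  Your approach stays entirely within the commutative picture and avoids the projection-lifting step; the paper's approach avoids the $\eps\to 0$ stabilization and yields the conclusion via a one-line operator identity.  Both exploit $t$-connectedness in the same essential way.

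For $(\Leftarrow)$ your direct finite-rank commutator computation is also correct and is effectively an unpacking of the paper's one-line observation that $\chi_{X_A}$ is a Higson function; the paper simply cites the same \cite[Proposition~3.6]{BaudierBragaFarahVignatiWillett2023} for this direction as well.
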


The proof of Theorem \ref{T.Q(X).CentralProjection} makes use of the Higson corona. For the reader's convenience, we recall its definition. Given a u.l.f.\ metric space $(X,d_X)$, a bounded map $h\colon X\to \C$ is a \emph{Higson function} if for all $\eps,R>0$ there is a finite $F\subseteq X$ such that for all $x,y\in X\setminus F$, we have 
\[d_X(x,y)\leq R\ \text{ implies }\ |h(x)-h(y)|\leq \eps.\]
The set of all Higson functions forms a \cstar-algebra denoted by $C_h(X)$. The quotient of this algebra by $c_0(X)$ (the functions vanishing at infinity) is the \emph{Higson corona of $X$}, denoted by $C(\nu X)$, precisely: \[C(\nu X)=C_h(X)/c_0(X).\]
As $C_h(X)\subseteq \ell_\infty(X)$ and $\mathcal{K}(\ell_2(X))\cap C_h(X)=c_0(X)$, we identify $C(\nu X)$ with a \cstar-subalgebra of $ \roeq(X)$ canonically.

\begin{proof}[Proof of Theorem \ref{T.Q(X).CentralProjection}]
For the backwards direction, notice that as $X=\bigsqcup_nX_n$ is a coarse disjoint union of finite spaces, a projection of the form $\chi_{X_A}$ for $A\subseteq \bbN$ is a Higson function. It was shown in \cite[Proposition 3.6]{BaudierBragaFarahVignatiWillett2023} that $C(\nu X)=\cZ(\roeq(X))$, whence it follows that $\pi(\chi_A)$ is in $\cZ(\roeq(X))$ as desired. 

We now establish the forward direction. For that, fix $p\in \cZ(\roeq(X))=C(\nu X)$. Since $C(\nu(X))\subseteq \ell_\infty(X)/c_0(X)$, we have $p\in\ell_\infty(X)/c_0(X)$. 
Since $\ell_\infty(X)$ is a von Neumann algebra, in particular it has real rank zero, and $p$ is the image of some projection in $\ell_\infty(X)$ by the quotient map (see for example \cite[Lemma~3.1.13]{Fa:STCstar}). Fix $B\subseteq X$ such that $\pi(\chi_B)=p$. 
	
We claim that there exists $A\subseteq \bbN$ such that the symmetric difference $B\Delta X_A$ is finite. Assume otherwise. Then the set 
	\[
	A'=\{n\in\N \mid X_n\setminus B\ \text{ and }\ X_n\cap B\ \text{ are nonempty}\}
	\]
	is infinite. Since each $X_n$ is $t$-connected, for each $n\in A'$ choose $x_n\in X_n\cap B$ and $x_n'\in X_n\setminus B$ such that $d(x_n,x_n')\leq t$. For each $n\in A$, let $v_n$ be the rank-one partial isometry which sends $\delta_{x_n'}$ to $\delta_{x_n}$. So, each $v_n$ has propagation $t$, and so does the non-compact partial isometry
	\[
	v=\SOTh\sum_{n\in A'}e_{x_n,x_n'}.
	\] 
Note that $\chi_B v=v$ and $v\chi_B=0$, hence $\pi(\chi_B)$ is not central in $\roeq(X)$. 	As we can fix $A\subseteq \bbN$ such that $B\Delta X_A$ is finite, then $p=\pi( \chi_{X_A})$, as required. 
\end{proof}

To avoid overly complicated expressions, we make use of the following notation in the next lemma: if $H$ is a Hilbert space, $(p_i)_i$ is a sequence of orthogonal projections, and $A\subseteq \bbN$, we write 
\[
p_A=\SOTh\sum_{i\in A} p_i.
\]

\begin{lemma}\label{L.piqi}
	Let $H$ be a Hilbert space and $(p_n)_n$ and $(q_n)_n$ be sequences of orthogonal projections of finite rank in $\cB(H)$ such that $\SOTh\sum_n q_n=\mathrm{Id}_H$. Suppose $\eps>0$ is such that $\|p_n-q_F\|\geq \varepsilon$ for all $n\in\N$ and all $F\subseteq \N$. 
	Then there is $A\subseteq \bbN$ such that $p_A-q_B$ is not compact for all $B\subseteq \bbN$. 
\end{lemma}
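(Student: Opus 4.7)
The plan is to construct $A$ by a diagonal argument so that the projections $p_{n_k}$ with $n_k \in A$ are essentially supported on mutually disjoint ``windows'' in the $q_m$-decomposition of $H$, and then to derive a contradiction from the hypothesis for any putative $B$. The starting point is the identity $\|p-q\|^2 = \max(\|(1-q)p\|^2, \|(1-p)q\|^2)$ for orthogonal projections, which rephrases the hypothesis as the following dichotomy: for each $n$ and each $F\subseteq \mathbb{N}$, either (i) there is a unit vector $\eta\in p_nH$ with $\|(1-q_F)\eta\| \geq \varepsilon$, or (ii) there is a unit vector $\zeta \in q_F H$ with $\|(1-p_n)\zeta\| \geq \varepsilon$.

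For the construction, fix a sequence $\delta_k \leq \varepsilon/4$ with $\sum_k \delta_k^2 < \varepsilon^2/16$. Since the $p_n$ are mutually orthogonal with $\SOTh\sum_n p_n \leq \Id_H$, one has $\sum_n \|q_{[1,N]} p_n\|_{\mathrm{HS}}^2 \leq \mathrm{rank}(q_{[1,N]}) < \infty$, so $\|q_{[1,N]} p_n\| \to 0$ as $n \to \infty$ for each fixed $N$; also $\|q_{(N,\infty)} p_n\| \to 0$ as $N\to\infty$ since $p_n$ has finite rank. This lets us inductively select $n_1 < n_2 < \cdots$ and $0 = N_0 < N_1 < \cdots$ with $\|q_{[1, N_{k-1}]} p_{n_k}\| < \delta_k$ and $\|q_{(N_k, \infty)} p_{n_k}\| < \delta_k$. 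Set $W_k := (N_{k-1}, N_k]$ (so the $W_k$ are pairwise disjoint) and $A := \{n_k : k \geq 1\}$. For $j \neq k$, $W_k \subseteq W_j^c$, hence $\|q_{W_k} p_{n_j}\| \leq \|q_{W_j^c} p_{n_j}\| < 2\delta_j$.

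Suppose toward contradiction that $B \subseteq \mathbb{N}$ makes $p_A - q_B$ compact; then $(1-p_A) q_B = -(p_A-q_B) q_B$ is also compact. Apply the dichotomy to $(n_k, F_k)$ with $F_k := B \cap W_k \subseteq B$; by pigeonhole, infinitely many $k$ fall into the same case. In case (i), a unit $\eta_k \in p_{n_k} H$ with $\|(1-q_{F_k})\eta_k\| \geq \varepsilon$, combined with the decomposition $1 - q_{F_k} = q_{B^c} + q_{B \cap W_k^c}$ and the estimate $\|q_{W_k^c}\eta_k\| \leq \|q_{W_k^c} p_{n_k}\| < 2\delta_k$, yields $\|(1-q_B)\eta_k\| \geq \sqrt{\varepsilon^2 - 4\delta_k^2} \geq \varepsilon \sqrt{3}/2$; since the $\eta_k$ are orthonormal in $p_A H$, they are weakly null, so $(p_A-q_B)\eta_k = (1-q_B)\eta_k$ does not tend to $0$, contradicting compactness. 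In case (ii), a unit $\zeta_k \in q_{F_k} H \subseteq q_B H$ has $\|p_{n_k}\zeta_k\|^2 \leq 1 - \varepsilon^2$, and the bound $\|p_{n_j}\zeta_k\| < 2\delta_j$ for $j \neq k$ (since $\zeta_k \in q_{W_k} H$) gives $\|p_A \zeta_k\|^2 \leq 1 - \varepsilon^2 + 4\sum_{j \neq k}\delta_j^2 < 1 - 3\varepsilon^2/4$, whence $\|(1-p_A)\zeta_k\| \geq \varepsilon \sqrt{3}/2$; since the $F_k$ are disjoint, the $\zeta_k$ are orthonormal in $q_B H$, hence weakly null, contradicting compactness of $(1-p_A) q_B$.

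The main technical hurdle is the diagonal/Hilbert--Schmidt argument in the second paragraph, ensuring each $p_{n_k}$ is genuinely concentrated on $W_k$ while still leaving enough mass on $W_k$ for the dichotomy to produce a useful vector; once the near-disjointness is achieved, both cases follow the same template and both lead to a contradiction.
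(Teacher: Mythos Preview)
Your proof is correct. The diagonal construction of the windows $W_k$ and the indices $n_k$ is sound (the Hilbert--Schmidt bound $\sum_n\|q_{[1,N]}p_n\|_{\mathrm{HS}}^2\leq\mathrm{rank}(q_{[1,N]})$ is exactly what is needed to make $\|q_{[1,N]}p_n\|\to 0$), and both cases of the dichotomy are handled cleanly: the vectors $\eta_k$ (resp.\ $\zeta_k$) are orthonormal in $p_AH$ (resp.\ $q_BH$), hence weakly null, and the lower bounds $\varepsilon\sqrt{3}/2$ on $\|(p_A-q_B)\eta_k\|$ (resp.\ $\|(1-p_A)q_B\zeta_k\|$) contradict compactness.

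The paper's argument shares your localization idea---it too builds disjoint intervals $I(n)$ (your $W_k$) on which the selected $p_{k(n)}$ concentrate---but then finishes more directly, without the two-projections identity or any case split. The point is that once $\|q_{I(n)}p_{k(n)}q_{I(n)}-p_{k(n)}\|$ is small, one gets $\|q_{I(n)}p_Aq_{I(n)}-p_{k(n)}\|\leq\varepsilon/2$ and hence
\[
\|q_{I(n)}(p_A-q_B)q_{I(n)}\|=\|q_{I(n)}p_Aq_{I(n)}-q_{B\cap I(n)}\|\geq \|p_{k(n)}-q_{B\cap I(n)}\|-\varepsilon/2\geq\varepsilon/2,
\]
which already contradicts compactness of $p_A-q_B$ since $q_{I(n)}\to 0$ in SOT. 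In effect, the block compression $q_{I(n)}(\cdot)q_{I(n)}$ lets the paper invoke the hypothesis $\|p_n-q_F\|\geq\varepsilon$ \emph{as is}, whereas you first translate the hypothesis into the dichotomy ``$(1-q_F)p_n$ large or $(1-p_n)q_F$ large'' and then produce explicit witnessing vectors in each case. Your route is a little longer but perfectly valid, and has the minor advantage of making the obstruction to compactness completely explicit via orthonormal sequences.
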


\begin{proof} 
Since $\SOTh\sum_n q_n=\mathrm{Id}_H$ and each of the $p_n$'s and $q_n$'s have finite rank, we can pick an increasing sequence $(k(n))_n$ in $ \bbN$ and a partition of $\bbN$ into intervals, say $\bbN=\bigsqcup_n I(n)$, such that 
	\begin{equation}\label{eq.pn(i)}
	\|q_{I(n)}p_{k(n)}q_{I(n)}-p_{k(n)}\|<2^{-n-2}\eps
	\end{equation}
 for all $n\in\N$. In particular, compressing the expression inside the norm in line \eqref{eq.pn(i)} by $q_{I(m)}$ shows that
 \begin{equation}\label{eq.pn(i)2}
\|q_{I(m)}p_{k(n)}q_{I(m)}\|<2^{-n-2}\eps
\end{equation}
for all distinct $m,n\in\N$. We claim that 
 \[
 A=\{k(n)\mid n\geq 2\}
 \] 
 is as required. First of all, note that by \eqref{eq.pn(i)} and \eqref{eq.pn(i)2}, we have 
\begin{align*}
\|p_{k(n)}-q_{I(n)}p_A & q_{I(n)}\|\\
&\leq \|p_{k(n)}-q_{I(n)}p_{k(n)}q_{I(n)}\|+\sum_{m\neq n, m\geq 2}\|q_{I(n)}p_{k(m)}q_{I(n)}\|\\
&\leq 2^{-n-2}\eps+\sum_{m\geq 2}2^{-m-2}\eps\leq\eps/2
\end{align*}
 Assume now that $A$ does not satisfy the thesis of the lemma and pick $B\subseteq \bbN$ such that $p_A-q_B$ is compact. Using that $\|p_n-q_F\|\geq \varepsilon$ for all $n\in\N$ and all $F\subseteq \N$, we conclude that 
	\begin{align*}
	\|q_{I(n)} (p_A-q_B) q_{I(n)}\|&= 	\|q_{I(n)} p_Aq_{I(n)} -q_{B\cap I(n)}\|\\
& \geq \|p_{k(n)}-q_{B\cap I(n)}\|-\eps/2\\
 &\geq\eps/2
	\end{align*}
for all $n\in\N$. Since the $q_{I(n)}$ are orthogonal,  this contradicts the fact that $p_A-q_B$ is compact. 
\end{proof}
	
\begin{proof}[Proof of Theorem~\ref{T.|Xn|=|Yn|}]
Let $\Phi\colon \cstu(X)\to \cstu(Y)$ be an isomorphism. By \cite[Theorem 1.2]{BaudierBragaFarahKhukhroVignatiWillett2021uRaRig}, there is a coarse equivalence $f\colon X\to Y$ such that 
\begin{equation}\label{EqCoarseEquivRig}
\gamma=\inf_{x\in X}\|\Phi(\chi_{\{x\}})\delta_{f(x)}\|>0.
\end{equation}
We need to show that for some cofinite subsets $N,M\subseteq\N$ and some bijection $i\colon N\to M$, the coarse equivalence $f\colon X\to Y$ satisfies 
$\sum_{n\in \N\setminus N}|X_n|=\sum_{n\in \N\setminus M}|Y_n|$, 
$|X_n|=|Y_{i(n)}|$,   and $f(X_n)\subseteq Y_{i(n)}$ for all $n\in N$.

Since $\Phi$ is implemented by a unitary (Lemma \cite[Lemma 3.1]{SpakulaWillett2013AdvMath}), it sends compact operators to compact operators  and it therefore  induces an isomorphism between $ \roeq(X)$ and $\roeq(Y)$. It then follows from Theorem~\ref{T.Q(X).CentralProjection} that for every $ A\subseteq \bbN$ there is $ A'\subseteq \bbN$ such that $\pi(\Phi(\chi_{X_A}))=\pi(\chi_{Y_{A'}})$. In other words, we have
		\begin{equation}\label{eq.A->A'}
 \forall A\subseteq \bbN,\ \exists A'\subseteq \bbN\ \text{ such that }\ 
		\Phi(\chi_{X_A})-\chi_{Y_{A'}}\ \text{ is compact}.
		\end{equation}
Let $\eps=\gamma/4$.

\begin{claim}
There is $n_0\in \N$ such that for every finite set $F\subseteq\bbN$ with $\min F>n_0$ there is  $i(F)\subseteq\bbN$ with the property that 
		\begin{equation}
		\label{eq.nn'}
		\|\Phi(\chi_{X_F}) -\chi_{Y_{i(F)}}\|<\varepsilon.
		\end{equation}
\end{claim}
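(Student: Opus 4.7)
The plan is to proceed by contradiction, applying Lemma~\ref{L.piqi} to violate the previously established property~\eqref{eq.A->A'}. Suppose the claim fails at $\varepsilon = \gamma/4$: then for every $n_0 \in \N$ there is a finite $F \subseteq \N$ with $\min F > n_0$ such that $\|\Phi(\chi_{X_F}) - \chi_{Y_B}\| \geq \varepsilon$ for every $B \subseteq \N$.

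Starting from this, I would recursively construct a sequence $(F_k)_k$ of pairwise disjoint finite subsets of $\N$ with $\min F_{k+1} > \max F_k$ and each $F_k$ satisfying the separation $\|\Phi(\chi_{X_{F_k}}) - \chi_{Y_B}\| \geq \varepsilon$ for all $B \subseteq \N$. Setting $p_k \coloneqq \Phi(\chi_{X_{F_k}})$ and $q_n \coloneqq \chi_{Y_n}$, the $p_k$ are pairwise orthogonal finite-rank projections (the $X_{F_k}$ are disjoint finite sets and $\Phi$ preserves orthogonality and rank, using that it is unitarily implemented by \cite[Lemma~3.1]{SpakulaWillett2013AdvMath}), and the $q_n$ are pairwise orthogonal finite-rank projections with $\SOTh\sum_n q_n = \mathrm{Id}_{\ell_2(Y)}$. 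Since $q_B = \chi_{Y_B}$ for every $B \subseteq \N$, the hypothesis of Lemma~\ref{L.piqi} is met: $\|p_k - q_B\| \geq \varepsilon$ for all $k \in \N$ and $B \subseteq \N$.

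Lemma~\ref{L.piqi} then produces $A \subseteq \N$ with the property that $p_A - q_B$ is not compact for any $B \subseteq \N$. The remaining step is to observe that $p_A = \Phi(\chi_{X_{A''}})$, where $A'' \coloneqq \bigsqcup_{k \in A} F_k$: this is where SOT-continuity of $\mathrm{Ad}(u)$ intervenes, letting us push the identity $\chi_{X_{A''}} = \SOTh\sum_{k \in A} \chi_{X_{F_k}}$ through $\Phi$. Applying \eqref{eq.A->A'} to the subset $A'' \subseteq \N$ then yields some $B' \subseteq \N$ with $p_A - q_{B'} = \Phi(\chi_{X_{A''}}) - \chi_{Y_{B'}}$ compact, contradicting the conclusion of Lemma~\ref{L.piqi}. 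The only genuinely delicate step is the SOT-continuity argument for $p_A = \Phi(\chi_{X_{A''}})$; everything else is a straightforward recursive pigeonholing together with an invocation of the prepared lemma.
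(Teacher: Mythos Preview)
Your proposal is correct and follows essentially the same route as the paper: assume the claim fails, recursively build disjoint finite sets $F_k$ with $\min F_{k+1}>\max F_k$ witnessing the failure, apply Lemma~\ref{L.piqi} with $p_k=\Phi(\chi_{X_{F_k}})$ and $q_n=\chi_{Y_n}$, and contradict \eqref{eq.A->A'}. You are in fact more careful than the paper, which silently identifies $p_A$ with $\Phi(\chi_{X_{A''}})$ for $A''=\bigsqcup_{k\in A}F_k$; your explicit use of the unitary implementation (hence SOT-continuity of $\Phi$) to justify this step is exactly the right way to fill that gap.
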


\begin{proof}
Assume not. Then we can find a sequence $(F_n)_n$ of disjoint finite subsets  of $\N$  with $\max F_n<\min F_{n+1}$ and such that for all $n\in \N$ and $G\subseteq\bbN$ we have that $\|\Phi(\chi_{X_{F_n}})-\chi_{Y_G}\|>\varepsilon$. Applying Lemma~\ref{L.piqi} with $p_n=\Phi(\chi_{X_{F_n}})$ and $q_n=\chi_{Y_n}$, we get an infinite $A\subseteq\bbN$ such that $\Phi(\chi_{X_A})-q_B$ is not compact for all infinite $B\subseteq\bbN$. This contradicts \eqref{eq.A->A'}. 
\end{proof}

Let $n_0\in \N$ be as in the claim and let \[i\colon \{F\subseteq \N\mid \min F>n_0 \text{ and }|F|<\infty\}\to \cP(\N)\] be the map given by the claim.  For simplicity, for each $n> n_0$, we write $i(n)$ for $i(\{n\})$.\footnote{With apologies to John von Neumann and any set theorists who may be reading this.}  Notice that, since $\Phi$ is an isomorphism and $\eps<1/2$, each $i(F)$  is well defined, finite, and nonempty.  Furthermore, as $\|\chi_{X_F}-\chi_{X_G}\|=1$ whenever $F\neq G$, $i$ is injective.

\begin{claim}
For all $n>n_0$ and $x\in X_n$, we have that $f(x)\in Y_{i(n)}$.
\end{claim}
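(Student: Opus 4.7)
The plan is to use the lower bound $\gamma$ coming from \cite[Theorem 1.2]{BaudierBragaFarahKhukhroVignatiWillett2021uRaRig} together with the approximation \eqref{eq.nn'} applied to the singleton $F=\{n\}$. The key observation is that, since $\{x\}\subseteq X_n$, we have the order relation $\chi_{\{x\}}\leq \chi_{X_n}$ between projections in $\cstu(X)$, and $\Phi$ preserves this order, so
\[
\Phi(\chi_{\{x\}})\leq \Phi(\chi_{X_n}).
\]

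First I would use the standard fact that for projections $p\leq q$ and any vector $v$, one has $\|pv\|\leq \|qv\|$ (since $q-p$ is a projection orthogonal to $p$). Applying this with $v=\delta_{f(x)}$, $p=\Phi(\chi_{\{x\}})$, and $q=\Phi(\chi_{X_n})$, together with \eqref{EqCoarseEquivRig}, gives
\[
\|\Phi(\chi_{X_n})\delta_{f(x)}\|\geq \|\Phi(\chi_{\{x\}})\delta_{f(x)}\|\geq \gamma.
\]
Next, since $n>n_0$, I would invoke \eqref{eq.nn'} for $F=\{n\}$, which gives $\|\Phi(\chi_{X_n})-\chi_{Y_{i(n)}}\|<\eps=\gamma/4$. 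Combining this with the previous inequality via the triangle inequality yields
\[
\|\chi_{Y_{i(n)}}\delta_{f(x)}\|\geq \gamma-\eps=\tfrac{3\gamma}{4}>0.
\]

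Finally, I would conclude by observing that $\chi_{Y_{i(n)}}\delta_{f(x)}$ equals either $\delta_{f(x)}$ (if $f(x)\in Y_{i(n)}$) or $0$ (if not). Since its norm is strictly positive, the only possibility is $f(x)\in Y_{i(n)}$, which proves the claim. There is no real obstacle here — the proof is just a clean combination of order-preservation by $\Phi$, the defining lower bound for $f$, and the approximation property of $i$; the main point is simply to notice that $\chi_{\{x\}}\leq\chi_{X_n}$ is the right relation to exploit.
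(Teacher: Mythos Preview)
Your proof is correct and follows essentially the same approach as the paper: both combine the order relation $\Phi(\chi_{\{x\}})\leq\Phi(\chi_{X_n})$, the lower bound $\gamma$ from \eqref{EqCoarseEquivRig}, and the approximation \eqref{eq.nn'} to force $\chi_{Y_{i(n)}}\delta_{f(x)}\neq 0$. Your version is in fact slightly more direct, as the paper routes the same estimate through $\chi_{Y_m}$ (where $f(x)\in Y_m$) before reaching the contradiction, whereas you apply everything to the vector $\delta_{f(x)}$ immediately.
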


\begin{proof}
Pick $m$ such that $f(x)\in Y_m$, and suppose that $m\notin i(n)$. Since $\gamma<\|\Phi(\chi_{\{x\}})\delta_{f(x)}\|$, we have that $\gamma<\|\Phi(\chi_{X_n})\chi_{Y_m}\|$. As $\|\Phi(\chi_{X_n})-\chi_{Y_{i(n)}}\|<\gamma/4$, we have that $\|\chi_{Y_{i(n)}}\chi_{Y_m}\|>0$, hence $Y_{i(n)}$ and $Y_{m}$ are not disjoint, which implies that $m\in i(n)$.
\end{proof}
Applying the  reasoning above to $Y$ and $\Phi^{-1}$ in place of $X$ and $\Phi$, we can find a natural $m_0$ and an injective function $j$ which associates to every set $G\subseteq\bbN$ with $\min G>m_0$, a set $j(G)\subseteq\bbN$ such that 
\[
\|\Phi^{-1}(\chi_{Y_G}) -\chi_{X_{j(G)}}\|<\varepsilon.
\]
Notice that if $F\subseteq\bbN$ is a finite subset such that $\min F>n_0$ and $\min i(F)>m_0$, then
\begin{align*}
\|\chi_{X_F}-\chi_{X_{j(i(F))}}\|&\leq \|\chi_{X_F}-\Phi^{-1}(\chi_{Y_{i(F)}})\|+\eps\\&=\|\Phi(\chi_{X_F})-\chi_{Y_{i(F)}}\|+\eps\\
&\leq 2\eps.
\end{align*}
As $\eps<1/2$, this  implies that $F=j(i(F))$.  Similarly, if  $G\subseteq\bbN$   is a finite subset such that $\min G>m_0$ and  $\min j(G)>n_0$, we obtain that  $G=i(j(G))$.

\begin{claim}\label{claim:almostbij}
If $n> n_0$ is such that $\min i(n)>m_0$, then $i(n)$ is a singleton.
\end{claim}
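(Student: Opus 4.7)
The plan is to argue by contradiction: suppose $|i(n)|\geq 2$ and partition $i(n)=G_1\sqcup G_2$ into two nonempty finite subsets of $\bbN$. Since $\min G_\ell\geq\min i(n)>m_0$ for $\ell=1,2$, the sets $j(G_1)$ and $j(G_2)$ are both defined. I would first observe that both are nonempty: as $\Phi^{-1}$ preserves the rank of projections, $\Phi^{-1}(\chi_{Y_{G_\ell}})$ is a nonzero projection, and since $\chi_{X_\emptyset}=0$ has distance $1$ from any nonzero projection, the inequality $\|\Phi^{-1}(\chi_{Y_{G_\ell}})-\chi_{X_{j(G_\ell)}}\|<\eps<1$ forces $j(G_\ell)\neq\emptyset$.

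The core computation starts from $\chi_{Y_{i(n)}}=\chi_{Y_{G_1}}+\chi_{Y_{G_2}}$. Applying $\Phi^{-1}$ and combining with $\|\Phi(\chi_{X_n})-\chi_{Y_{i(n)}}\|<\eps$ and the two approximations $\|\Phi^{-1}(\chi_{Y_{G_\ell}})-\chi_{X_{j(G_\ell)}}\|<\eps$, the triangle inequality yields
\[
\|\chi_{X_n}-(\chi_{X_{j(G_1)}}+\chi_{X_{j(G_2)}})\|<3\eps.
\]
To upgrade the sum on the right to a single projection, I would next show that $j(G_1)\cap j(G_2)=\emptyset$. Because $\chi_{Y_{G_1}}\chi_{Y_{G_2}}=0$ and $\Phi^{-1}$ is a $*$-homomorphism, $\Phi^{-1}(\chi_{Y_{G_1}})\,\Phi^{-1}(\chi_{Y_{G_2}})=0$, so
\[
\|\chi_{X_{j(G_1)}}\chi_{X_{j(G_2)}}\|<2\eps<1;
\]
being a projection, this product must be zero, giving disjointness and hence $\chi_{X_{j(G_1)}}+\chi_{X_{j(G_2)}}=\chi_{X_{j(G_1)\sqcup j(G_2)}}$.

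The previous bound then reads $\|\chi_{X_n}-\chi_{X_{j(G_1)\sqcup j(G_2)}}\|<3\eps<1$. Since $X_m\neq\emptyset$ for every $m$, distinct subsets $A\neq B$ of $\bbN$ yield $\|\chi_{X_A}-\chi_{X_B}\|=1$, and therefore the bound forces $\{n\}=j(G_1)\sqcup j(G_2)$. This contradicts $|j(G_1)|+|j(G_2)|\geq 2$, completing the argument.

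The main delicate point will be verifying nonemptiness of each $j(G_\ell)$, which is actually just the observation that $\chi_{X_A}=0$ only when $A=\emptyset$; the remainder is triangle-inequality bookkeeping, using the fact that $\eps=\gamma/4\leq 1/4$ makes both $2\eps$ and $3\eps$ strictly less than $1$.
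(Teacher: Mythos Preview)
Your argument is correct and follows essentially the same strategy as the paper's: both derive a contradiction by showing that distinct pieces of $i(n)$ are sent by $j$ into the singleton $\{n\}$. The paper picks two individual elements $m_1,m_2\in i(n)$, uses the already-established identity $j(i(n))=\{n\}$ to show each $j(m_k)\subseteq\{n\}$, and then invokes injectivity of $j$; you instead partition $i(n)=G_1\sqcup G_2$ and directly compute that $j(G_1)\sqcup j(G_2)=\{n\}$ with both pieces nonempty, which is a minor repackaging of the same idea.
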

\begin{proof}
Suppose that $n$ is such that $\min i(n)>m_0$ and there are two distinct $m_1$ and $m_2$ in $i(n)$. Then $\chi_{Y_{m_1}}\chi_{Y_{i(n)}}=\chi_{Y_{m_1}}$, and so, using that $j(i(n))=n$ we have, for $k=1,2$,
\begin{align*}
\|\chi_{X_{j(m_k)}}\chi_{X_n}-\chi_{X_{j(m_k)}}\|&= \|\chi_{X_{j(m_k)}}\chi_{X_{j(i(n))}}-\chi_{X_{j(m_k)}}\|\\&=\|\Phi(\chi_{X_{j(m_k)}}\chi_{X_{j(i(n))}}-\chi_{X_{j(m_k)}})\|\\&\leq\|\chi_{Y_{m_k}}\chi_{Y_{i(n)}}-\chi_{Y_{m_k}}\|+3\eps\\
&\leq3\eps.
\end{align*}
Since $\chi_{X_{j(m_k)}}$ and $\chi_{X_n}$ are commuting projections, $\chi_{X_{j(m_k)}}\chi_{X_n}=\chi_{X_{j(m_k)}}$ for $k=1,2$. This implies $m_1=n=m_2$, contradicting the  injectivity of $j$ on sets whose minimum is above $m_0$.
\end{proof}
Let $N=\{n\in\N\mid n>n_0\text{ and } i(n)>m_0\}$ and $M=\{i(n)\mid n\in N\}$. Claim~\ref{claim:almostbij} implies that  $i$ is a bijection between $N$ and $M$.
\begin{claim}
$N$ and $M$ are cofinite.
\end{claim}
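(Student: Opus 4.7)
The plan is to argue by contradiction: supposing $N$ were not cofinite, I would exhibit a finite-rank projection $P$ in $\cstu(X)$ whose compressions by certain mutually disjoint projections have norm close to one, contradicting its compactness. Cofiniteness of $M$ will then follow from the symmetric argument applied to $\Phi^{-1}$.

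In detail: if $N$ were not cofinite, the set $\{n > n_0 \mid \min i(n) \leq m_0\}$ would be infinite, so a pigeonhole argument over the finite set $\{1,\dots,m_0\}$ would yield a fixed $m \leq m_0$ together with a strictly increasing sequence $(n_k)_k$ such that $m \in i(n_k)$ for every $k$. Since $\chi_{Y_m}\chi_{Y_{i(n_k)}}=\chi_{Y_m}$, compressing \eqref{eq.nn'} from both sides by $\chi_{Y_m}$ and then applying the isometric $\Phi^{-1}$ gives, for $P \coloneqq \Phi^{-1}(\chi_{Y_m})$, the inequality $\|P\chi_{X_{n_k}}P - P\| < \varepsilon$, whence $\|P\chi_{X_{n_k}}P\| > 1-\varepsilon$ for every $k$.

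The decisive step is to contradict this lower bound. By \cite[Lemma 3.1]{SpakulaWillett2013AdvMath} the isomorphism $\Phi$ is spatially implemented, so it preserves the ideal of compact operators; in particular $P$ is a finite-rank operator on $\ell_2(X)$. For any $\eta > 0$ I would choose a finite $F \subseteq X$ with $\|P - \chi_F P \chi_F\| < \eta$, and since $F$ is finite while the $X_n$'s are pairwise disjoint in $X$, we have $F \cap X_{n_k} = \emptyset$ for all sufficiently large $k$. For such $k$, $\chi_F \chi_{X_{n_k}} = 0$, so $\|P\chi_{X_{n_k}} P\| \leq 2\eta$. Choosing $\eta < (1-\varepsilon)/2$ gives the desired contradiction. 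I expect this combination of spatial implementation with the coarse-disjoint-union property to be the main technical content.

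For cofiniteness of $M$, I would replay the entire argument with the roles of $\Phi$ and $\Phi^{-1}$ swapped to obtain that $N' \coloneqq \{m > m_0 \mid \min j(m) > n_0\}$ is cofinite. For each $m \in N'$, applying the identity $G = i(j(G))$ (already established in the main body for finite $G$ with $\min G > m_0$ and $\min j(G) > n_0$) to $G = \{m\}$ yields $\{m\} = i(j(m))$; therefore $j(m) \in N$ and $m = i(j(m)) \in M$, giving $N' \subseteq M$ and hence cofiniteness of $M$.
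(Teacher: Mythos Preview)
Your proposal is correct and follows essentially the same approach as the paper. The paper's terse justification ``Since $\Phi$ maps compacts to compacts, we have that $\min i(F)\to\infty$ as $\min F\to\infty$'' is exactly what your contradiction/pigeonhole/finite-rank argument unpacks in detail; for $M$, both proofs invoke the symmetric statement for $j$ together with the identity $G=i(j(G))$ to place all sufficiently large $m$ in $M$.
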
 

\begin{proof}
Since $\Phi$ maps compacts to compacts, we have that $\min i(F)\to\infty$ as $\min F\to\infty$, hence $N$ is cofinite. Similarly, $\min j(G)\to\infty$ as $\min G\to\infty$. Let $m_1>m_0$ be large enough so that for all $G\subseteq\bbN$ with $\min G>m_1$ we have that $\min j(G)>n_0$. Then, by what we saw above, if $m>m_1$, we have $m=i(j(m))$. So,    $M$ is cofinite.
\end{proof}

As we have already noted, $\Phi$ is implemented by a unitary (see Lemma \cite[Lemma 3.1]{SpakulaWillett2013AdvMath}) and therefore it is  rank preserving. Thus  the defining property of $i$, see \eqref{eq.nn'}, gives that   for all  finite $F\subseteq N$ we have 
\[
|X_F|=\mathrm{rank}(\Phi(\chi_{X_F}))=\mathrm{rank}(\chi_{Y_{i(F)}})=|Y_{i(F)}|.
\] Analogously,  if $G\subseteq M$ is finite  we have that $|Y_G|=|X_{j(G)}|$. We are left to show that the sets 
\[
W\coloneqq \bigcup_{n\in\bbN\setminus N}X_n\text{ and }Z\coloneqq \bigcup_{n\in\bbN\setminus M}Y_n
\]
have the same size. Suppose this is not the case and assume that $|W|>|Z|$.

Since $\Phi$ maps compacts to compacts, we can find a large enough finite $F\subseteq Y$ such that $\|\Phi(\chi_W)\chi_F-\Phi(\chi_W)\|<1/4$. By enlarging $F$, we can assume that $Z\subseteq F$ and that if $Y_k$ intersects $F$, then $Y_k\subseteq F$. Let \[M'=\{k\in M\mid Y_k\subseteq F\}\] and notice that
\begin{equation}\label{rank1}
\rank(\chi_F)=|Z|+\rank(\chi_{Y_{M'}})=|Z|+|Y_{M'}|.
\end{equation}
Since $\chi_{Y_{M'}}\chi_F=\chi_{Y_{M'}}$ and $\|\chi_{Y_{M'}}-\Phi(\chi_{X_{j(M')}})\|<\eps$, we have that 
\[
\|\Phi(\chi_{X_{j(M')}})\chi_F-\Phi(\chi_{X_{j(M')}})\|<2\eps<\frac{1}{4}.
\]
Combining this with the fact that $\|\Phi(\chi_W)\chi_F-\Phi(\chi_W)\|<1/4$, we have   
\[
\|\Phi(\chi_W+\chi_{X_{j(M')}})\chi_F-\Phi(\chi_W+\chi_{X_{j(M')}})\|<\frac{1}{2}.
\]
 Since $j(M')\subseteq N$,   $\chi_W$ and $\chi_{X_{j(M')}}$ are orthogonal, and so $\Phi(\chi_W+\chi_{X_{j(M')}})$ is a projection of rank $|W|+|X_{j(M')}|$. By usual linear algebra arguments, 
\begin{equation}\label{rank2}
|W|+|X_{j(M')}|=\rank (\Phi(\chi_W+\chi_{X_{j(M')}}))\leq\rank(\chi_F).
\end{equation}
 Putting \eqref{rank1} and \eqref{rank2} together and using that $|X_{j(M')}|=|Y_{M'}|$, we get that 
\begin{align*}
\rank(\chi_F)&=|Z|+|Y_{M'}|<|W|+|Y_{M'}|=|W|+|X_{j(M')}|\leq\rank(\chi_F).
\end{align*}
This is a contradiction, and therefore $|W|\leq |Z|$. The same exact argument proves that $|Z|\leq |W|$ and this finishes the proof.
	\end{proof}

\begin{proof}[Proof of Theorem \ref{ThmIsoURAImpliesBijCoarseEquivExp}]
 By Theorem \ref{T.|Xn|=|Yn|}, there is a coarse equivalence $f\colon X\to Y$ which satisfies part \eqref{ThmBijCorExpItem2} of Theorem \ref{ThmBijCorExp}. By the equivalences in Theorem \ref{ThmBijCorExp}, it follows that there is a bijective coarse equivalence $g\colon X\to Y$.
\end{proof}
 
 \begin{acknowledgments}
This paper was written under the auspices of the American Institute of Mathematics (AIM) SQuaREs program as part of the `Expanders, ghosts, and Roe algebras' SQuaRE project. F.\ B.\ was partially supported by the US National Science Foundation under the grants DMS-1800322 and DMS-2055604. B.\ M.\ B.  was partially supported by FAPERJ (Proc. E-26/200.167/2023) and by CNPq (Proc. 303571/2022-5). I.\ F.\ is partially supported by NSERC. A.\ V.\ is supported by an `Emergence en Recherche' IdeX grant from Universit\'e Paris Cit\'e and an ANR grant (ANR-17-CE40-0026). R.\ W.\ was partially supported by the US National Science Foundation under the grants DMS-1901522 and DMS-2247968.
\end{acknowledgments}

\newcommand{\etalchar}[1]{$^{#1}$}
\providecommand{\bysame}{\leavevmode\hbox to3em{\hrulefill}\thinspace}
\providecommand{\MR}{\relax\ifhmode\unskip\space\fi MR }
\providecommand{\MRhref}[2]{%
  \href{http://www.ams.org/mathscinet-getitem?mr=#1}{#2}
}
\providecommand{\href}[2]{#2}

\end{document}